\title{Terminal orders on arithmetic surfaces}
\author{Daniel Chan}
\thanks{DC was supported by the Australian Research Council Discovery Project grant  DP220102861}
\address{School of Mathematics and Statistics, 
UNSW Sydney, 
NSW 2052,
Australia
}
\email{danielc@unsw.edu.au}
\author{Colin Ingalls}
\thanks{CI was partially supported by a Discovery Grant from the
  National Science and Engineering Research Council of Canada}
\address{School of Mathematics and Statistics, Carleton University, Ottawa, ON K1S 5B6, Canada}
\email{cingalls@math.carleton.ca}
\thanks{}
\keywords{Arithmetic Surfaces, Orders, Minimal Model Program}
\subjclass[2020]{16H10,16S38}
\begin{document}
\maketitle

\begin{abstract}
The local structure of terminal Brauer classes on arithmetic surfaces were classified in \cite{CI24} generalising the classification on geometric surfaces carried out in \cite{CI05}. Part of the interest in these classifications is that it enables the minimal model program to be applied to the noncommutative setting of orders on surfaces. In this paper, we give \'etale local structure theorems for terminal orders on arithemtic surfaces, at least when the degree is a prime $p >5$. This generalises the structure theorem given in the geometric case. They can all be explicitly constructed as algebras of matrices over symbols. From this description one sees that such terminal orders all have global dimension two, thus generalising the fact that terminal (commutative) surfaces are smooth and hence homologically regular. 
\end{abstract}

\section{Introduction}  \label{sec:intro}
Given a smooth point on a complex variety of dimension $d$, the \'etale local structure is $\Spec R$ where $R =  \bC\{x_1,\ldots, x_d\}$, the algebra of algebraic power series in $d$ variables. This result elegantly captures, in an algebraic fashion, the idea that manifolds are all locally Euclidean. In \cite{CI05}, a noncommutative analogue of this result was given for the case of orders on a complex surface. Here we extend the result to arbitrary surfaces, at least under some mild hypotheses. 

More precisely, let $R$ be an excellent noetherian normal two dimensional Hensel local domain with fraction field $K$. Let $\Lambda$ be a maximal $R$-order in some central simple $K$-algebra. It is natural to associate the Brauer class $\beta_{\Lambda} \in \textup{Br}\, K,$ to this algebra, so one can use the minimal model program for surfaces enriched by a Brauer class as developed in \cite{CI05} and \cite{CI24}. More generally, if $\Lambda$ is a normal $R$-order (see Definition~\ref{def:normalorder}) then we attach a localised Brauer class $(\beta,g_{\frakp})$ (in Definition~\ref{def:localisedBrauer}) where the integers $g_{\frakp}$ measure how much $\Lambda$ deviates from being maximal at the codimension one prime $\frakp$. 

When the residue field $\kappa$ is algebraically closed, terminal localised Brauer classes $(\beta,g_{\frakp})$, and the corresponding terminal orders were completely classified in \cite{CI05} and shown to always have global dimension two. In \cite{CI05}, it is shown that i) $R$ is smooth, ii) $\beta$ is zero or has ramification along a normal crossing divisor $C_1 \cup C_2$, and iii) $\Lambda$ is maximal except possibly along a curve of multiplicity one when $\beta=0,$ or one of the $C_i$ otherwise. Furthermore, a complete structure theorem was given for terminal normal orders in this case (see \cite[Section~2]{CI05}). If $\Lambda$ is maximal, then it is isomorphic to a full matrix algebra over a symbol $\Delta = R\langle y,z \rangle / (y^m - u, z^m - v, zy - \zeta yz)$ where $\zeta$ is some primitive $m$-th root of unity and $u,v \in R$ is a regular system of parameters. In general, one obtains a triangular modulo $z$ matrix algebra over such symbols as defined in  Definition~\ref{def:triangularMatrix}.

When $\kappa$ is a finite field of characteristic prime to the order $m$ of $\beta$, it turns out there are more possibilities for terminal localised Brauer classes. If $m$ is a prime $>5$, the terminal localised Brauer classes were completely classified in \cite{CI24}. The new possibilities are summed up in Definitions~\ref{def:stableno2ndram} and \ref{def:terminalHJ}, but briefly, when $R$ is regular, there is the additional possibility that $\beta$ is ramified on a single multiplicity one curve, and more interestingly, $R$ can also be a type of Hirzebruch-Jung singuluarity, in which case $\beta \neq 0,$ though it is unramified along codimension one primes of $R$. Our main theorem is the following result which generalises the aforementioned structure theory of terminal normal orders to this arithmetic situation. Again, symbols feature significantly but in the more general sense of tensor products of a $\mathbb{Z}/m$-extension of $R$ with a $\mu_m$-extension and graded components skew commute according to the natural pairing $\mathbb{Z}/m \times \mu_m \to \mu_m$ (see  Definition~\ref{def:symbol}). 

\begin{theorem}  \label{thm:intromain}
Let $R$ be an excellent noetherian two-dimensional normal Hensel local domain. Let $\Lambda$ be a terminal  $R$-order whose degree is a prime, say $m>5$. If the residue field of $R$ contains a primitive $m$-th root of unity and has trivial Brauer group, then $\Lambda$ has global dimension two. In fact, all such $\Lambda$ are explicitly constructed in Propositions~\ref{prop:structurestable2ndram}, \ref{prop:structuretoralterminalregno2nd} and Theorem~\ref{thm:mainHJ} as various triangular modulo $z$ matrix algebras  over symbols.
\end{theorem}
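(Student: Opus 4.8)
The plan is to reduce the theorem to the classification of terminal localised Brauer classes carried out in \cite{CI24} and then to assemble the three explicit structure results. The order $\Lambda$ is terminal, hence normal, so its associated localised Brauer class $(\beta,g_{\frakp})$ of Definition~\ref{def:localisedBrauer} is defined and is itself terminal; I would therefore first invoke the complete classification of such classes for prime degree $m>5$ over an arithmetic surface. Under the standing hypotheses---the residue field $\kappa$ contains a primitive $m$-th root of unity and has trivial Brauer group---that classification partitions the terminal classes into exactly the regimes covered by the cited results: the case where $R$ is regular and $\beta$ ramifies on a normal crossing divisor with possible secondary ramification (Proposition~\ref{prop:structurestable2ndram}), the case where $R$ is regular and $\beta$ ramifies on a single multiplicity-one curve with no secondary ramification, governed by Definition~\ref{def:stableno2ndram} (Proposition~\ref{prop:structuretoralterminalregno2nd}), and the case where $R$ is a Hirzebruch--Jung singularity with $\beta$ unramified in codimension one, governed by Definition~\ref{def:terminalHJ} (Theorem~\ref{thm:mainHJ}).

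The second step is the reconstruction of $\Lambda$ from its class within each regime. The localised Brauer class records both the ramification of $\beta$ and the integers $g_{\frakp}$ measuring the failure of maximality, and the aim is to show these data pin $\Lambda$ down, up to Morita equivalence, as an explicit algebra. Over a regular local base, a maximal order with prescribed tame ramification is forced to be a full matrix algebra over a symbol $\Delta$ in the sense of Definition~\ref{def:symbol}: the trivial Brauer group of $\kappa$ together with the primitive $m$-th root of unity rigidifies the residual data and forces the central simple $K$-algebra to be cyclic, hence to admit a symbol presentation. When $\Lambda$ is not maximal, the nonzero $g_{\frakp}$ force the hereditary-along-$\frakp$ behaviour to appear as a triangular modulo $z$ matrix algebra in the sense of Definition~\ref{def:triangularMatrix}, exactly as in the geometric situation of \cite{CI05}. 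Carrying this out in each case is precisely the content of the three cited results, so this step is delegated to them.

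The final step is the uniform conclusion $\operatorname{gl.dim}\Lambda=2$. Once $\Lambda$ is presented as a triangular modulo $z$ matrix algebra over a symbol, I would compute the global dimension locally by exhibiting length-two projective resolutions of the simple $\Lambda$-modules, or equivalently by verifying an Auslander-type regularity criterion. For a symbol over a two-dimensional regular local ring this is the standard homological regularity already used in \cite{CI05}, and the passage to the triangular modulo $z$ form preserves global dimension two by the same bookkeeping with the simple modules supported along the ramification locus.

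I expect the Hirzebruch--Jung case of Theorem~\ref{thm:mainHJ} to be the main obstacle. There $R$ is genuinely singular, so one cannot inherit regularity of the base; instead $\Lambda$ must behave as a noncommutative resolution, and the assertion $\operatorname{gl.dim}\Lambda=2$ becomes the statement that the order homologically resolves the singularity even though $\beta$ is nontrivial yet unramified in codimension one. The delicate points are constructing the correct symbol over the singular ring, checking that the triangular modulo $z$ matrix algebra is the right global object compatible with this class, and then pushing the projective-resolution computation through in the absence of a regular system of parameters on $R$. Establishing finite global dimension in this singular setting, rather than in the two regular cases, is where the real work lies.
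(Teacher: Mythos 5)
Your proposal follows essentially the same route as the paper: reduce, via the classification of terminal localised Brauer classes of prime order $m>5$ in \cite{CI24}, to the three toral terminal regimes (regular centre with or without secondary ramification, and the Hirzebruch--Jung singular centre), and then delegate both the explicit structure and the global-dimension-two conclusion to Propositions~\ref{prop:structuretoralterminalregno2nd}, \ref{prop:structurestable2ndram} and Theorem~\ref{thm:mainHJ}, correctly flagging the singular case as the genuinely new difficulty. The only divergence is cosmetic and lies inside the delegated results: the paper obtains global dimension two not by resolving the simple modules directly or via an Auslander-type criterion, but by exhibiting a normal element $t\in\Delta_d$ with $\Delta_d/t\Delta_d$ hereditary (Proposition~\ref{prop:Deltadisnormal}) and then invoking the uniqueness theorem (Theorem~\ref{thm:Deltadzunique}) to force $\Lambda\simeq M_n(\Delta_d)$.
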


To prove the theorem, we construct explicit examples of normal orders with all the possible terminal localised Brauer classes. In the case when $R$ is regular, this is relatively straight forward. In the singular case, we need to show that our Hirzebruch-Jung singularity, defined to have a minimal resolution whose exceptional locus is a string of projective lines defined over the residue field of $R$, is actually a cyclic quotient singularity. We prove this in Section~\ref{sec:HJcyclic}, and give an explicit construction of the regular cyclic cover which is used in the construction of the corresponding terminal orders. The other step is to show that these explicitly constructed orders are sufficiently nice (in particular, have global dimension two) and that any normal order with the same ramification data has to be Morita equivalent to them. We follow the basic framework of \cite{CI05}, Unfortunately, the use of the Cohen-Structure theorem in \cite{CI05} is unavailable in this setting, so we give a streamlined method avoiding this tool in Section~\ref{sec:almostmaximal}.

\section{Uniqueness result for regular almost maximal orders}  \label{sec:almostmaximal}

In this section, we review basic definitions of orders, their ramification theory and the relationship with the Brauer group. Since the notion of maximal orders is not stable under \'etale localisation, we review normal orders as introduced in \cite{CI05}. The main result is a uniqueness type result for certain normal orders which have global dimension two and are maximal everywhere except possibly on a single irreducible divisor. This was proved in \cite{CI05} over an algebraically closed field using some complicated argument in \S 2.3. We present a streamlined proof here using the classification of normal orders over discrete valuation rings found in the Appendix \S \ref{sec:appendix}.

Let $R$ be a noetherian normal domain and $K$ its field of fractions. Given a central simple $K$-algebra $Q$, an {\em order} $A$ in $Q$ is an $R$-subalgebra such that $A$ is a finitely generated $R$-module such that $KA=Q$. Then in fact $K \otimes_R A \simeq Q$ so we sometimes dispense with explicitly mentioning $Q$ and say a finite $R$-algebra $A$ is an $R$-order if it is a torsion-free $R$-module such that $K \otimes_R A$ is a central simple $K$-algebra. We define the {\em degree} of $A$ to be the $\deg A := \deg K \otimes_R A  = \sqrt{\dim_K K \otimes_R A}$.

One ought to think of $A$ as a model of the noncommutative ``field'' $Q$ in this case. The classical noncommutative analogue of the notion of normality, is that the order is {\em maximal}, that is, if $A'$ is another order in $Q$ containing $A$, then $A = A'$. Unfortunately, this notion is not stable under \'etale localisation. When $R$ is two-dimensional, the following condition was introduced in \cite{CI05} to remedy this defect, taking its cue from Serre's criterion for normality in the commutative case.
\begin{definition} \label{def:normalorder}
Let $R$ be a two-dimensional normal domain. An $R$-order $A$ is said to be {\em normal} if the following two conditions hold.
\begin{enumerate}
    \item $A$ is a reflexive $R$-module.
    \item For every height one prime $\frakp$, the localisation $A_{\frakp}$ is {\em normal} in the sense that its radical is principal as a left and right ideal.
\end{enumerate}
\end{definition}
The second condition is thoroughly analysed in the Appendix \S~\ref{sec:appendix}. Note that maximal orders are normal, and that normal orders are tame. 

Let $R$ be a two-dimensional normal domain and $A$ be a normal $R$-order. Since $K \otimes_R A$ is a central simple $K$-algebra, it determines a corresponding Brauer class $\beta_A \in \text{Br}\, K$. Given any codimension one prime $\frakp \triangleleft R$,  with corresponding residue field $\kappa(\frakp)$, there is a ramification map 
$$
a_{\frakp} \colon \text{Br}\, K \longrightarrow H^1_{\acute{e}t}(\kappa(\frakp),\bQ/\bZ).
$$
As noted by Artin and Mumford~\cite{AM}, this map can be interpreted in terms of orders as follows. First note that $a_{\frakp}(\beta_A)$, being an element of $H^1_{\acute{e}t}(\kappa(\frakp),\bQ/\bZ)$ is given by a cyclic field extension $\kappa'$ of $\kappa(\frakp)$ and a choice of generator $\sigma$ for the Galois group $\text{Gal}(\kappa'/\kappa(\frakp))$. Let $J$ be the radical of $A_{\frakp}$ which is principal, and so is generated by an element, say $\pi$. Then $\kappa' = Z(A_{\frakp}/J)$ and $\sigma$ is the automorphism induced by conjugation by $\pi$. Note that $a_{\frakp}(\beta_A) = 0$ means that $A_{\frakp}$ is Azumaya so the collection of non-zero $a_{\frakp}(\beta_A)$ is called the ramification data of $A$. The above results are described in more detail in~\cite[\S 1]{ArtindeJong},~\cite{C11},\cite[\S 4]{CThesis},\cite[\S 1]{GI}.

If now $B$ is a normal order contained in the maximal order $A$ above, then from the Appendix, we know that $Z(B_{\frakp}/\text{rad}\, B_{\frakp}) \simeq \prod_{i=1}^d \kappa'$ for some $d$. Furthermore, conjugation by a generator $t$ of $\text{rad}\, B_{\frakp}$ permutes the $d$ factors cyclically, and conjugation by $t^d$ reduces to $\sigma$. The ramification data of $B$ will thus not only include the $a_{\frakp}(\beta_A)$, but also the integers $g_{\frakp} := d$. Since $B$ is generically Azumaya and thus maximal, we find on varying $\frakp$ that all but finitely many of the $g_{\frakp}$ will be one. 

\begin{definition}  \label{def:localisedBrauer}
A {\em localised Brauer class on $R$} is a pair $(\beta, g_{\frakp})$ consisting of a Brauer class $\beta \in \text{Br}\, K$ and a function assigning to each codimension one prime $\frakp \triangleleft R$ a positive integer $g_{\frakp}$ which equals one for all but finitely many $\frakp$. In particular, the localised Brauer class of the normal order $B$ above is $(\beta_A,g_{\frakp})$ in the notation of the previous paragraph.
\end{definition}

We now give an instance where the localised Brauer class and $R$-rank of a normal $R$-order determines the isomorphism class of the order.

\begin{assumption}  \label{ass:nicenormalorder}
Suppose now that $R$ is a Hensel local two-dimensional normal domain. Let $\Delta$ be a maximal $R$-order in a division ring and suppose that there exists a normal element $z \in \Delta$ such that 
\begin{enumerate}
    \item The quotient $\Delta/z\Delta$ is supported, as an $R$-module on a codimension one prime $\frakq$
    \item The element $z$ generates the radical of $\Delta_{\frakq}$
    \item $\Delta/z\Delta$ is hereditary.
\end{enumerate}
\end{assumption}
Under these assumptions, we construct the order
\begin{equation}  \label{eq:defDeltad}
\Delta_d = \Delta_d(z) := 
\begin{pmatrix}
\Delta & \Delta & \cdots & \Delta \\
 z\Delta & \Delta & & \vdots \\
\vdots & \ddots & \ddots & \vdots \\
z \Delta & \cdots & z \Delta & \Delta
\end{pmatrix}
\subseteq M_d(\Delta)    
\end{equation}
\begin{definition}  \label{def:triangularMatrix}
We will refer to the subalgebra $\Delta_d$ in (\ref{eq:defDeltad}) above as a {\em triangular modulo $z$ matrix algebra}.
\end{definition}
\begin{proposition}
\label{prop:Deltadisnormal}
Under Assumption~\ref{ass:nicenormalorder}, the order $\Delta_d$ is normal and has global dimension two. Furthermore, its localised Brauer class $(\beta,g_{\frakp})$ is given by
\begin{enumerate}
    \item $\beta$ is the Brauer class of $\Delta$.
    \item $g_{\frakq} = d$ and all other $g_{\frakp} = 1$.
\end{enumerate}
\end{proposition}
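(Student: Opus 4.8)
The plan is to treat the three assertions in turn, dispatching the generic structure, reflexivity, and the height one analysis first, and deferring the global dimension statement to the end since it is the only delicate point.

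\textbf{Generic structure, the Brauer class, and reflexivity.} First I would observe that $z$, being a normal nonzerodivisor that is nonzero in the division ring $D := K\otimes_R\Delta$, is a unit there; hence $K\otimes_R\Delta_d = M_d(D)$ and $\Delta_d$ is an $R$-order in $M_d(D)$. Its Brauer class is therefore $[M_d(D)]=[D]=\beta$, giving assertion (1) on the Brauer class. For reflexivity, since $\Delta$ is a maximal order over the normal domain $R$ it is a reflexive $R$-module, and left multiplication by the nonzerodivisor $z$ gives an $R$-module isomorphism $\Delta \xrightarrow{\sim} z\Delta$; thus $\Delta_d$, being a finite direct sum of copies of $\Delta$ and $z\Delta$, is reflexive, establishing condition (1) of normality.

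\textbf{Localisation at height one primes and the invariants $g_{\frakp}$.} For a height one prime $\frakp\neq\frakq$ the module $\Delta/z\Delta$ vanishes at $\frakp$, so $z$ is a unit in $\Delta_{\frakp}$ and $(\Delta_d)_{\frakp}=M_d(\Delta_{\frakp})$; as $\Delta_{\frakp}$ is maximal over the discrete valuation ring $R_{\frakp}$, so is $M_d(\Delta_{\frakp})$, which is therefore normal with $g_{\frakp}=1$. At $\frakq$ I would exhibit the normalising element
\[
t=\begin{pmatrix} 0 & 1 & & \\ & 0 & \ddots & \\ & & \ddots & 1 \\ z & & & 0 \end{pmatrix}\in\Delta_d,
\]
which satisfies $t\Delta_d=\Delta_d t$ and $t^{d}=z\,\mathrm{Id}$. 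Over $R_{\frakq}$ the order $(\Delta_d)_{\frakq}$ is the triangular modulo $z$ construction over the maximal order $\Delta_{\frakq}$ whose radical is $z\Delta_{\frakq}$; by the Appendix it is normal, its radical is the principal ideal generated by $t$, conjugation by $t$ cyclically permutes the $d$ diagonal copies of $\kappa'=Z(\Delta_{\frakq}/z\Delta_{\frakq})$, and conjugation by $t^{d}=z\,\mathrm{Id}$ induces $\sigma$. Comparing with Definition~\ref{def:localisedBrauer} gives $g_{\frakq}=d$ and condition (2) of normality, completing the proof that $\Delta_d$ is normal with the stated localised Brauer class.

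\textbf{Global dimension two and the main obstacle.} The key preliminary observation is that $\Delta$ is a \emph{local} ring: it is module finite over the Hensel local ring $R$, hence semiperfect, and being an order in the division ring $D$ it has no nontrivial idempotents, so $\Delta/\operatorname{rad}\Delta$ is a division ring. As $z$ is a nonunit (because $\Delta/z\Delta\neq 0$) it lies in $\operatorname{rad}\Delta$, and from this I would deduce that $t$ lies in $\operatorname{rad}\Delta_d$ globally, not merely at $\frakq$. A direct computation of the two sided ideal $t\Delta_d=\Delta_d t$ shows it consists of the matrices with entries in $\Delta$ strictly above the diagonal and in $z\Delta$ on and below it, whence
\[
\Delta_d/t\Delta_d \;\cong\; (\Delta/z\Delta)^{\times d}.
\]
By Assumption~\ref{ass:nicenormalorder}(3) the ring $\Delta/z\Delta$ is hereditary, so this quotient has global dimension one, and the change of rings theorem for the normal nonzerodivisor $t\in\operatorname{rad}\Delta_d$ yields $\operatorname{gl.dim}\Delta_d=\operatorname{gl.dim}(\Delta_d/t\Delta_d)+1=2$. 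The subtle step, and the one that replaces the appeal to the Cohen structure theorem in \cite{CI05}, is exactly this passage to the closed point: one must know that $t$ is in the radical and that $\Delta_d/t\Delta_d$ is precisely the hereditary ring $(\Delta/z\Delta)^{\times d}$. Both follow cleanly from the locality of $\Delta$ and the explicit form of $t$, so that the height one information from the Appendix propagates to the closed point without any structure theory for complete local rings. Finally, the lower bound $\operatorname{gl.dim}\Delta_d\geq 2$ is automatic from the equality above, since $\Delta/z\Delta$ is a nontrivial order over the one dimensional ring $R/\frakq$ and so has global dimension exactly one.
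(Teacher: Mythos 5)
Your proposal is correct and takes essentially the same route as the paper: reflexivity of $\Delta_d$ via reflexivity of $\Delta$ and $z\Delta$, the case split at height one primes $\frakp \neq \frakq$ versus $\frakp = \frakq$, and the same normal element $t$ with $\Delta_d/t\Delta_d \simeq \prod_{i=1}^d \Delta/z\Delta$ hereditary giving global dimension two. The only difference is that you make explicit the details the paper leaves implicit in the last step (locality of $\Delta$, the verification that $t \in \textup{rad}\, \Delta_d$, and the change-of-rings argument), which is an elaboration of the same argument rather than a different approach.
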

\begin{proof}
Note (1) follows from the fact that $\Delta_d$ is an order in $M_n(K\Delta)$. To check $\Delta_d$ is a reflexive $R$-module, note first that the $\Delta$ is reflexive being a maximal order. Also, $\Delta$ is a domain so $z$ must be a non-zero-divisor. Thus $z\Delta$ and hence also $\Delta_d$ are reflexive as well.

We consider now local structure at a codimension one prime $\frakp$. If $\frakp \neq \frakq$, then from Assumption~\ref{ass:nicenormalorder}(1), we know that $(\Delta_d)_{\frakp} \simeq M_d(\Delta_{\frakp})$ so is maximal. On the other hand, Assumption~\ref{ass:nicenormalorder} (2) ensures that $(\Delta_d)_{\frakq}$ is normal and $g_{\frakq} = d$. This completes the verificaton of (2). 

Finally, consider the normal element 
$$
t := 
\begin{pmatrix}
0 & 1 & 0 & \cdots & 0 \\
\vdots & 0 & \ddots & \ddots& \vdots \\
\vdots & \vdots & \ddots & \ddots &  0\\
0 & \vdots &  & \ddots & 1 \\
z  & 0 & \cdots & \cdots & 0
\end{pmatrix} \in \Delta_d.
$$
By Assumption~\ref{ass:nicenormalorder}(3), we see that $\Delta_d/t\Delta_d \simeq \prod_{i=1}^d \Delta/z\Delta$ is hereditary, so $\Delta_d$ itself must have global dimension two. 
\end{proof}
In the light of this proposition, we might consider $\Delta_d$ to be regular and almost maximal as in the title of this section. 

\begin{theorem}  \label{thm:Deltadzunique}
Suppose that Assumptions~\ref{ass:nicenormalorder} hold and let $\Delta_d$ be the normal $R$-order of (\ref{eq:defDeltad}). Let $\Lambda$ be any normal $R$-order with the same localised Brauer class as $\Delta_d$. Then $n = \frac{\deg \Lambda}{\deg \Delta_d}$ is an integer and $\Lambda \simeq M_n(\Delta_d)$ so has global dimension two.
\end{theorem}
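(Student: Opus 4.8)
The plan is to reduce the global statement to codimension one via the Appendix classification, and then to glue the single nontrivial local modification back together using reflexivity and the Henselian hypothesis. Since $\Lambda$ has the same Brauer class $\beta$ as $\Delta_d$, and $\beta$ is the class of the division ring $D := K\otimes_R \Delta$, the algebra $K \otimes_R \Lambda$ is a matrix algebra $M_N(D)$ for some $N$. The condition $g_{\frakq} = d$ in the shared localised Brauer class means, via the classification of normal orders over the discrete valuation ring $R_{\frakq}$ in \S\ref{sec:appendix}, that $\Lambda_{\frakq}$ is built from $d$ blocks of equal size, forcing $d \mid N$. Hence $n := N/d = \deg \Lambda / \deg \Delta_d$ is a positive integer, and it remains to identify $\Lambda$ with $M_n(\Delta_d)$.

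First I would pin down the local structure at every height one prime $\frakp$. The localisation $\Lambda_{\frakp}$ is a normal order over the DVR $R_{\frakp}$ whose ramification invariants $a_{\frakp}(\beta)$ and $g_{\frakp}$ agree with those of $(\Delta_d)_{\frakp}$, so by \S\ref{sec:appendix} the two are isomorphic; equivalently $\Lambda_{\frakp} \simeq M_n\big((\Delta_d)_{\frakp}\big)$. For $\frakp \neq \frakq$ this asserts that $\Lambda_{\frakp}$ is maximal (indeed $\simeq M_N(\Delta_{\frakp})$), while at $\frakq$ it records the triangular-modulo-$z$ shape. Thus the only height one prime at which $\Lambda$ fails to be maximal is $\frakq$.

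Next I would choose a maximal $R$-order $\Gamma \supseteq \Lambda$ in $M_N(D)$. By the previous paragraph $\Gamma_{\frakp} = \Lambda_{\frakp}$ for all height one $\frakp \neq \frakq$, and since $\Lambda$ is reflexive it equals the intersection of its height one localisations; consequently $\Lambda = \Gamma \cap \Lambda_{\frakq}$ inside $M_N(D)$. So $\Lambda$ is completely determined by the pair consisting of the global maximal order $\Gamma$ and the single local order $\Lambda_{\frakq} \subseteq \Gamma_{\frakq}$. The key point, and the step replacing the Cohen structure theorem of \cite{CI05}, is that over the Henselian local ring $R$ the maximal order $\Delta$ in the division ring $D$ is unique, and every reflexive $\Delta$-lattice is free; writing $\Gamma = \operatorname{End}_{\Delta}(P)$ for a reflexive progenerator $P$ then gives $\Gamma \simeq M_N(\Delta)$. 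Under this identification the normal element $z$ and the block-triangular prescription at $\frakq$ coming from \S\ref{sec:appendix} realise $\Gamma \cap \Lambda_{\frakq}$ as exactly a conjugate of $M_n(\Delta_d)$.

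The main obstacle is the globalisation in the last paragraph: matching the local datum at $\frakq$ to a global one requires both the freeness of the reflexive $\Delta$-lattice $P$ (so that $\Gamma \simeq M_N(\Delta)$ rather than a nontrivial Morita twist) and the compatibility of the triangular shape read off at $\frakq$ with this global trivialisation. Both are supplied by the Henselian hypothesis, which makes $\Lambda$ semiperfect so that the idempotents cutting out the $d$ diagonal blocks lift from $\Lambda/\operatorname{rad}\Lambda$, together with reflexivity, which guarantees that an $S_2$ order is pinned down by its codimension one behaviour once the ambient maximal order is fixed. Once $\Lambda \simeq M_n(\Delta_d)$ is established, global dimension two is inherited from Proposition~\ref{prop:Deltadisnormal} by Morita invariance.
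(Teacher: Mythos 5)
Your opening moves are sound and in fact agree with the paper's: the integrality of $n$ does follow from the Appendix (the $t$-conjugation in Proposition~\ref{prop:cyclesimples} forces the Wedderburn components of $\Lambda_{\frakq}/\textup{rad}\,\Lambda_{\frakq}$ to occur with equal multiplicity, giving $d \mid N$), and Corollary~\ref{cor:uniquenormal} then identifies $\Lambda_{\frakp}$ with $M_n\bigl((\Delta_d)_{\frakp}\bigr)$ at every height one prime. The genuine gap is your gluing step. You fix a maximal order $\Gamma \supseteq \Lambda$, trivialise $\Gamma \simeq M_N(\Delta)$, and then assert that the block-triangular prescription at $\frakq$ ``realises $\Gamma \cap \Lambda_{\frakq}$ as exactly a conjugate of $M_n(\Delta_d)$.'' That sentence is the whole difficulty, and nothing you have written proves it. The abstract isomorphism $\Lambda_{\frakq} \simeq M_n\bigl((\Delta_d)_{\frakq}\bigr)$ is realised, by Skolem--Noether, as conjugation by some $q \in M_N(D)^{\times}$; but such a $q$ need not normalise $\Gamma$, so standardising the local picture at $\frakq$ destroys the global trivialisation $\Gamma = M_N(\Delta)$, and conversely. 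Even if you upgrade the local statement to conjugacy by a unit $g \in \Gamma_{\frakq}^{\times}$, you only get $g^{-1}\Lambda g = g^{-1}\Gamma g \cap M_n\bigl((\Delta_d)_{\frakq}\bigr)$, and $g^{-1}\Gamma g$ is a maximal order agreeing with $\Gamma$ at $\frakq$ but in general differing from it at other height one primes: over a Henselian but non-complete $R$, units of $\Gamma_{\frakq}$ are not approximable by units of $\Gamma$ (already $R^{\times}$ is not dense in $R_{\frakq}^{\times}$), so the local conjugation cannot be transported into the intersection. You would then need the isomorphism class of $\Gamma' \cap M_n\bigl((\Delta_d)_{\frakq}\bigr)$ to be independent of the ambient maximal order $\Gamma'$, which is essentially a restatement of the theorem. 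This simultaneous local/global normalisation is exactly what the paper's proof is engineered to avoid: it never trivialises the ambient maximal order, but instead forms the $(\Lambda,\Lambda')$-bimodule $B = (\Lambda\Lambda')^{**}$ with $\Lambda' = M_n(\Delta_d)$ after matching only the embeddings at $\frakq$, shows $B$ is projective over $\Lambda'$ (reflexive implies Cohen--Macaulay implies projective by \cite{Ramras}, using global dimension two of $\Lambda'$ from Proposition~\ref{prop:Deltadisnormal}), identifies $B \simeq \Lambda'$ by Krull--Schmidt together with the fact from Proposition~\ref{prop:cyclesimples} that projective $\Lambda'$-modules are determined by their localisation at $\frakq$, and finally checks that $\Lambda \to \End_{\Lambda'} B = \Lambda'$ is an isomorphism in codimension one. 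Some mechanism of this kind (or a proof of transitivity of a suitable unit-group action) is missing from your argument and cannot simply be waved in.

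A secondary but real error: you attribute the uniqueness of the maximal order $\Delta$ in $D$ and the freeness of reflexive $\Delta$-lattices to the Henselian hypothesis alone. Henselianness gives neither. For instance, with $\Delta = R$ a non-regular normal Henselian domain, a reflexive ideal in a nontrivial divisor class is reflexive but not free. What makes the claims true here is Assumption~\ref{ass:nicenormalorder}: by Proposition~\ref{prop:Deltadisnormal} (case $d=1$) the order $\Delta$ has global dimension two, so a reflexive $\Delta$-lattice is maximal Cohen--Macaulay over $R$ and hence projective over $\Delta$ by \cite{Ramras}; it is then free because $\Delta$ is semiperfect (finite over Henselian local $R$) with no nontrivial idempotents (it embeds in a division ring), hence a local ring. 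If you repair the gluing step, this chain of justifications must be made explicit --- note it is the same chain the paper runs for $B$ over $\Lambda'$.
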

\begin{proof}
Suppose $\deg \Delta_d | \deg \Lambda$ and let $\Lambda':= M_n(\Delta_d)$. Since the localised Brauer classes of $\Lambda, \Lambda'$ coincide, as do their degrees, we may embed them both in a common central simple $K$-algebra $Q$. By Corollary~\ref{cor:uniquenormal}, we know that $\Lambda_{\frakq} \simeq \Lambda'_{\frakq}$ so by altering one of the embeddings, we may suppose that we actually have $\Lambda_{\frakq} = \Lambda'_{\frakq}$.  Consider the $(\Lambda,\Lambda')$-bimodule  $B:=(\Lambda\Lambda')^{**} \subset Q$, the reflexive hull of the $R$-module $\Lambda\Lambda'$. Now $\Lambda'$ has global dimension two by Proposition~\ref{prop:Deltadisnormal} and $B$ is  Cohen-Macaulay as an $R$-module so is projective as a $\Lambda'$-module by \cite[Proposition~3.5]{Ramras} (the hypotheses are stated differently there but the proof applies in our case). 

We first show an isomorphism of right $\Lambda'$ modules, $B_{\Lambda'} \simeq \Lambda'_{\Lambda'}$. Now $R$ is Henselian so Krull-Schmidt holds for $\Lambda'$-modules. The indecomposable projective $\Lambda'$-modules are isomorphic to summands of $\Lambda'$ and there are exactly $d$ isomorphism classes of these, say $P_1, \ldots, P_d$, corresponding to the rows of $\Delta_d$. From Proposition~\ref{prop:cyclesimples} there exists an integer $r$ such that $(P_i)_{\frakq}/(P_i)_{\frakq} (\text{rad}\, \Lambda'_{\frakq}) \simeq S_i^{\oplus r}$ where $S_i$ is a simple $\Lambda'_{\frakq}$ module and that furthermore, the $S_i$ are all non-isomorphic. In particular, two finitely generated projective $\Lambda'$-modules are isomorphic, if and only if their localisations at $\frakq$ are isomorphic. Now by our choice of embeddings, $B_{\frakq} = \Lambda'_{\frakq}$ so $B \simeq \Lambda'$ as desired. 

To complete the proof of the theorem when $\deg \Delta_d | \deg \Lambda$, it suffices to show that the natural map $\Lambda \to \End_{\Lambda'} B = \Lambda'$ is an isomorphism. Since both sides are reflexive, this can be checked on codimension one primes $\frakp$. When $\frakp \neq \frakq$, it is an isomorphism since $\Lambda$ is maximal. When $\frakp = \frakq$, it is an isomorphism since we recalibrated so $\Lambda_{\frakq} = \Lambda'_{\frakq} = B_{\frakq}$. 

If $\deg \Delta_d$ does not divide $\deg \Lambda$, we apply the special case proved to $M_{\deg \Delta_d}(\Lambda)$, which shows that at least $\Lambda$ is Morita equivalent to $\Delta_d$. Hence $\Lambda \simeq \End_{\Delta_d} P$ for some projective $\Delta_d$-module. We can argue as before, looking locally at $\frakq$ to see that $d$ indecomposable projective modules occur equally in the decomposition of $P$, so $P \simeq \Delta_d^{\oplus n}$ for some $n$ as desired. 
\end{proof}

\section{toral terminal orders, regular centre case} 
\label{sec:terminalregular}
We have a series of concepts that are motivated by toric or toroidal geometry, but that does not satisfy either of these definitions.  We call these {\it toral}.
Let $(R,\frakm)$ be a two-dimensional noetherian regular Hensel local domain with field of fractions $K$. We introduce the notion of a {\em toral terminal} localised Brauer class on $R$.  These are all terminal. 
 Futhermore, in the special setting of \cite{CI24}, i.e.~when $R$ is an arithmetic surface with finite residue field and the ramification data are all $p$-torsion for some prime $p$, this  exactly agrees with terminal. Assuming that $R$ has enough roots of unity and a trivial Brauer group, we then classify the normal $R$-orders associated to toral terminal localised Brauer classes and show they all are regular in the sense that they have global dimension two. 

In the regular centre case, toral terminal localised Brauer classes fall into two types. The first, without secondary ramification is defined below.

\begin{definition}  \label{def:stableno2ndram}
A localised Brauer class $(\beta, g_{\frakp})$ on $R$ is {\em toral terminal without secondary ramification} if there exists a regular system of parameters $u,v \in R$ such that the following holds
\begin{enumerate}
    \item $\beta$ is unramified at every codimension one prime $\frakp$ except possibly $\frakp = (u)$ (that is, $a_{\frakp}(\beta) = 0$ for $\frakp \neq (u)$) and,
    \item All $g_{\frakp} = 1$ except possibly $\frakp = (v)$. 
\end{enumerate}
\end{definition}

We will construct orders via symbols and so need to assume the existence of enough roots of unity.
\begin{definition} \label{def:symbol}
Suppose $\zeta \in R$ is a primitive $n$-th root of unity. Given $a,b \in R-0$ we define the {\em $R$-symbol} $(a,b):=(a,b)^R_{\zeta}$ to be the $R$-algebra
$$
\Lambda = \frac{R\langle x,y \rangle}{(x^n-a, y^n-b, yw-\zeta xy)}.
$$
\end{definition}

\begin{proposition}  \label{prop:structuretoralterminalregno2nd}
Let $R$ be a two-dimensional regular noetherian Hensel local domain and $(\beta,g_{\frakp})$ be a toral terminal localised Brauer class with ramification as given in Definition~\ref{def:stableno2ndram}. Suppose that $\text{Br}\, R/\frakm = 0$ and $R$ possesses a primitive $n$-th root of unity $\zeta$ where $n$ is the order of $\beta$ in the $\text{Br}\, K$. 

Let $a\in R$ be any element chosen so the ramification of $\beta$ along $(u)$ is given by adjoining an $n$-th root of $a$. Then 

\begin{enumerate}
    \item $\Delta = (u,a)^R_{\zeta}$ is a maximal order in a division ring with the same ramification data as $\beta$. 
    \item Any normal order $\Lambda$ with localised Brauer class $(\beta,g_{\frakp})$ is isomorphic to $M_n(\Delta_d(v))$ (see notation in (\ref{eq:defDeltad})) where $d = g_{(v)}$.
\end{enumerate}
In particular, $\Lambda$ has global dimension two. 
\end{proposition}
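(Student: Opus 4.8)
The plan is to realise $\Delta$ as a symbol, verify part (1) directly, then check that Assumption~\ref{ass:nicenormalorder} holds for $\Delta$ with the central element $z=v$ and $\frakq=(v)$, so that part (2) follows by combining Proposition~\ref{prop:Deltadisnormal} with Theorem~\ref{thm:Deltadzunique}.

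For part (1) I would first normalise the choice of $a$. The residue $a_{(u)}(\beta)$ is a cyclic degree-$n$ extension $\kappa'$ of $\kappa((u))=\mathrm{Frac}(R/(u))$, and since $R/(u)$ is a Henselian DVR with maximal ideal $(\bar v)$, the hypothesis of \emph{no secondary ramification} (Definition~\ref{def:stableno2ndram}) says exactly that $\kappa'/\kappa((u))$ is unramified over $R/(u)$; hence its Kummer generator may be taken to be the reduction of a unit $a\in R^{\times}$ whose image in $\kappa$ is not an $n$-th power. For this $a$ I would compute the ramification of the $K$-symbol $(u,a)^{K}_{\zeta}$ with the tame symbol: at every height-one prime $\frakp\neq(u)$ both $u$ and $a$ are units of $R_{\frakp}$, so the symbol is Azumaya there, while at $(u)$ the tame symbol returns $\kappa((u))(\sqrt[n]{\bar a})=\kappa'$. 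Thus $(u,a)^K_\zeta$ has the same ramification data as $\beta$. Because $R$ is two-dimensional regular Henselian with $\mathrm{Br}(R/\frakm)=0$, purity gives $\mathrm{Br}\,R\cong\mathrm{Br}\,\kappa=0$, so a Brauer class over $K$ is determined by its ramification; hence $[(u,a)^K_\zeta]=\beta$. As $\beta$ has order $n$ and the symbol has degree $n$, the divisibilities $\mathrm{ord}\mid\mathrm{ind}\mid\deg$ force the index to be $n$, so $(u,a)^K_\zeta$ is a division algebra. Finally $\Delta=(u,a)^R_\zeta$ is a free, hence reflexive, $R$-module that is maximal at every height-one prime (Azumaya, hence maximal, away from $(u)$, and the totally ramified symbol order at $(u)$, which after completion is the valuation ring of the local division algebra), so it is maximal by Auslander--Goldman. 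This gives (1).

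For part (2) I would check the three conditions of Assumption~\ref{ass:nicenormalorder} for $z=v$, which is central and therefore normal, and $\frakq=(v)$. Condition (1) holds since $v$ is a unit in $R_{\frakp}$ for every height-one $\frakp\neq(v)$, so $\Delta/v\Delta$ is supported on $(v)$; condition (2) holds because $\Delta$ is Azumaya at $(v)$, whence $\mathrm{rad}\,\Delta_{(v)}=v\Delta_{(v)}$; and condition (3) holds because $\Delta/v\Delta$ is the symbol order $(\bar u,\bar a)_{\zeta}$ over the Henselian DVR $R/(v)$ with uniformiser $\bar u$ and unit $\bar a$ of non-$n$-th-power residue, hence the totally ramified, and so maximal and therefore hereditary, order over that DVR. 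Proposition~\ref{prop:Deltadisnormal} then shows $\Delta_d(v)$, with $d=g_{(v)}$, is normal of global dimension two and has localised Brauer class exactly $(\beta,g_{\frakp})$. Theorem~\ref{thm:Deltadzunique} applied to any normal order $\Lambda$ sharing this localised Brauer class yields $\Lambda\simeq M_n(\Delta_d(v))$ with $n=\deg\Lambda/\deg\Delta_d(v)$, and in particular $\mathrm{gl.dim}\,\Lambda=2$.

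The main obstacle is the opening normalisation: translating the geometric ``no secondary ramification'' condition into the algebraic statement that $a$ can be taken a unit whose residue in $\kappa$ is a non-$n$-th power. Once that is in place, the tame-symbol ramification computation, the order/index argument for the division property, and the Azumaya and heredity bookkeeping at $(v)$ are all routine, and the structure result is purely a matter of quoting Proposition~\ref{prop:Deltadisnormal} and Theorem~\ref{thm:Deltadzunique}.
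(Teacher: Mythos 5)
Your proposal is correct and follows essentially the same route as the paper: normalise $a$ to a unit via cancellation of secondary ramification, check the symbol is Azumaya away from $(u)$ with the correct ramification at $(u)$, use $\text{Br}\, R = 0$ to identify Brauer classes and order-equals-degree to get a division ring, then verify Assumption~\ref{ass:nicenormalorder} for $z=v$ and conclude by Proposition~\ref{prop:Deltadisnormal} and Theorem~\ref{thm:Deltadzunique}. The only (immaterial) divergence is in Assumption~\ref{ass:nicenormalorder}(3), where you identify $\Delta/v\Delta$ with the maximal, hence hereditary, symbol order over the DVR $R/(v)$, whereas the paper exhibits the regular normal element $x=\sqrt[n]{u}$ whose quotient $\Delta/(x,v)$ is the separable field extension $(R/\frakm)(\sqrt[n]{a})$.
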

\begin{proof}
Note that as secondary ramification must cancel, the ramification of $\beta$ along $(u)$ must be given by an \'etale cyclic extension of $R/(u)$, say of degree $n$. Now the cyclic \'etale extensions of $R/\frakm, R/(u)$ and $R$ all coincide, so we may find $a \in R$ defining this ramification (by adjoining $\sqrt[n]{a}$). Also, since $\text{Br}\, R = \text{Br}\, R/\frakm = 0$, we know $n$ is the order of $\beta$. 

Note that $\Delta = (u,a)^R_{\zeta}$ is Azumaya on the open set $u \neq 0$ and is maximal at the generic point $\mathfrak{p}$ of $u=0$ and has the same ramification as $\beta$ at $\mathfrak{p}$. Since $\Delta$ is also reflexive, it is a maximal order which has the same ramification as that of $\beta$. Furthermore, as already observed, $\text{Br}\, R = 0$ so both $\beta$ and $\Delta$ determine the same Brauer class in $\text{Br}\, K$. 

We seek now to apply Theorem~\ref{thm:Deltadzunique}. We begin by verifying Assumption~\ref{ass:nicenormalorder} for $z = v$. Note first that $\Delta$ is a domain since its degree coincides with the period. Clearly $\Delta/ v \Delta$ is supported along the prime $(v)$ only, and in fact $v$ generates the radical of the localisation $\Delta_{(v)}$. It remains to verify Assumption~\ref{ass:nicenormalorder}(3). Let $x \in \Delta$ be the $n$-th root of $u$ as in Definition~\ref{def:symbol}. Then $x$ gives a non-zero-divisor in $\Delta / v \Delta$.  Furthermore, $\Delta /(x,v)$ is the separable extension $(R/\frakm)(\sqrt[n]{a})$ so $\Delta/ v \Delta$ is indeed hereditary. 
\end{proof}

\begin{definition} \label{def:stable2ndram}
A localised Brauer class $(\beta,g_{\frakp})$ on $R$ is {\em toral terminal with secondary ramification} if there exists a regular system of parameters such that the following holds:
\begin{enumerate}
    \item $\beta$ is unramified away from $(uv)$.
    \item $\beta$ is ramified along both $(u)$ and $(v)$ and the ramification at these prime ideals are given by totally ramified field extensions of the residue fields.
    \item all $g_{\frakp} = 1$ except possibly $g_{(v)}$.
\end{enumerate}
More generally, (but still assuming $R$ regular), we say a localised Brauer class $(\beta,g_{\frakp})$ is {\em toral terminal} if it is either toral terminal with secondary ramification as above, or toral terminal without secondary ramification as in Definition~\ref{def:stableno2ndram}.
\end{definition}

\begin{proposition}  \label{prop:structurestable2ndram}
Let $R$ be a two-dimensional regular noetherian Hensel local domain and $(\beta,g_{\frakp})$ be a toral terminal localised Brauer class with secondary ramification as given in Definition~\ref{def:stable2ndram}. Suppose that $\text{Br}\, R/\frakm = 0$ and $R$ possesses a primitive $n$-th root of unity where $n$ is the order of $\beta$ in $\text{Br}\, K$. Any normal order $\Lambda$ with localised Brauer class $(\beta,g_{\frakp})$ is isomorphic to $M_n(\Delta_d(y))$ (see notation in (\ref{eq:defDeltad})) where
\begin{enumerate}
    \item $d = g_{(v)}$.
    \item $\Delta_d(y)$ is built from the maximal order $$\Delta = (au,bv)^R_{\zeta}, $$
    where $a,b \in R^{\times}$ are units, $\zeta$ is an appropriate $n$-th root of unity and $y$ is the $n$-th root of $v$ used in Definition~\ref{def:symbol}.
\end{enumerate}
In particular, $\Lambda$ has global dimension two. 
\end{proposition}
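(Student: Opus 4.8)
The plan is to mirror the proof of Proposition~\ref{prop:structuretoralterminalregno2nd}, the only structural change being that $\beta$ is now genuinely ramified along \emph{both} $(u)$ and $(v)$, so the realizing symbol is ramified in both directions and the normal element of Assumption~\ref{ass:nicenormalorder} is the $n$-th root $y$ of $bv$ rather than the parameter $v$ itself. First I would realize $\beta$ by a symbol $\Delta=(au,bv)^R_\zeta$. Computing the tame symbols $\partial_{(u)}$ and $\partial_{(v)}$ of $(au,bv)$ shows that $\Delta$ ramifies precisely along $(u)$ and $(v)$, with residue extensions obtained by adjoining $n$-th roots of $\overline{bv}$ and $\overline{au}$ respectively. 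Since $\bar v$ and $\bar u$ are uniformizers of $R/(u)$ and $R/(v)$, both residue extensions are totally ramified, as required by Definition~\ref{def:stable2ndram}(2). The units $a,b$ and the root $\zeta$ are then chosen so that these residues coincide with the prescribed ramification of $\beta$; the secondary ramifications of the two residues at $\frakm$ automatically satisfy the reciprocity forced on a symbol, which is exactly the cancellation built into the definition of a toral terminal class with secondary ramification. As $\text{Br}\, R=\text{Br}\, R/\frakm=0$ kills any unramified ambiguity, $\beta$ and $\Delta$ give the same class in $\text{Br}\, K$.

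Since the degree $n$ of $\Delta$ equals the period of $\beta$, we have that $\Delta$ is a division algebra, and being reflexive and maximal at every height-one prime it is a maximal $R$-order in a division ring. I would then verify Assumption~\ref{ass:nicenormalorder} with $z=y$. From $y^n=bv$ and $b\in R^\times$ we get $v\in y\Delta$, so $\Delta/y\Delta$ is supported on $(v)$, giving (1). At $(v)$ the localisation $\Delta_{(v)}$ is the totally ramified symbol $(\text{unit},\text{uniformizer})$ over the discrete valuation ring $R_{(v)}$, whose radical is generated by $y$, giving (2). For (3), killing $y$ collapses the skew relation and leaves the \emph{commutative} quotient $\Delta/y\Delta\simeq (R/(v))[x]/(x^n-\overline{au})$; since $\bar u$ is a uniformizer of $R/(v)$ and $n$ is prime to the residue characteristic, this is the ring of integers of a tame totally ramified extension, hence a discrete valuation ring and in particular hereditary. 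Equivalently, $x$ is a non-zero-divisor in $\Delta/y\Delta$ and $\Delta/(x,y)\simeq\kappa$ is separable.

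With Assumption~\ref{ass:nicenormalorder} in hand, Theorem~\ref{thm:Deltadzunique} applies to $\Delta_d(y)$ with $d=g_{(v)}$ and shows that any normal order $\Lambda$ with localised Brauer class $(\beta,g_{\frakp})$ is isomorphic to $M_n(\Delta_d(y))$ and has global dimension two. I expect the main obstacle to be the first step: producing units $a,b$ so that $(au,bv)^R_\zeta$ reproduces the prescribed ramification of $\beta$ along both curves simultaneously. This is where the secondary-ramification structure of Definition~\ref{def:stable2ndram} is essential, since it is precisely the reciprocity that allows a single symbol to carry two totally ramified residues at once; the subsequent verification of Assumption~\ref{ass:nicenormalorder} for $z=y$ is a mild adaptation of Proposition~\ref{prop:structuretoralterminalregno2nd}, and is in fact cleaner here because $\Delta/y\Delta$ turns out to be commutative.
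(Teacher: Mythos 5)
Your proposal is correct and follows essentially the same route as the paper: realize $\beta$ by the symbol $(au,bv)^R_\zeta$ — using Kummer theory plus total ramification to produce the units $a,b$ and the root $\zeta$, with cancellation of secondary ramification guaranteeing that a single $\zeta$ serves both curves, and $\text{Br}\,R = \text{Br}\,R/\frakm = 0$ to upgrade equal ramification to equal Brauer classes — then verify Assumption~\ref{ass:nicenormalorder} for $z=y$ and apply Theorem~\ref{thm:Deltadzunique}. The only difference is one of emphasis: the paper spells out the Kummer-theoretic choice of $a,b,\zeta$ and leaves the verification of Assumption~\ref{ass:nicenormalorder} as ``along the same lines'' as Proposition~\ref{prop:structuretoralterminalregno2nd}, whereas you compress the former and expand the latter (correctly, including the useful observation that $\Delta/y\Delta \simeq (R/(v))[x]/(x^n-\overline{au})$ is a commutative discrete valuation ring, hence hereditary).
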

\begin{proof}
We use Theorem~\ref{thm:Deltadzunique} along the same lines as the proof of Proposition~\ref{prop:structuretoralterminalregno2nd}. 
It suffices to find $u,v, \zeta$ such that $(u,v)^R_{\zeta}$ has the same ramification as $\beta$. 

Let $\kappa_u$ denote the residue field at the point $(u)$. The ramification $a_{(u)}(\beta) \in H^1(\kappa_u, \bQ/\bZ)$ of $\beta$ along $(u)$ corresponds to a cyclic field extension $\tilde{\kappa}/\kappa_u$ and a generator of the Galois group. Since we assumed existence of primitive $n$-th roots of unity, we may use Kummer theory to see that $\tilde{\kappa} = \kappa_u(\sqrt[n]{\bar{v}})$ for some $\bar{v} \in \kappa_u$. Since this is a totally ramified extension, we may change generators and assume that $\bar{v}$ is the restriction of $bv$ for some $b \in R^{\times}$. If $\sigma$ is the chosen generator of the Galois group, then $\sigma(\sqrt[n]{bv}) = \zeta \sqrt[n]{bv}$ where $\zeta$ is the $n$-th root of unity required in (2) above. Arguing the same way for ramification of $\beta$ along $(v)$ and using the fact that secondary ramification cancels, we see that $a_{(v)}(\beta)$ is given by adjoining an $n$-th root of $au$ for some $a \in R^{\times}$ and we are done. 
\end{proof}

\section{Hirzebruch-Jung singularities as cyclic quotient singularities} \label{sec:HJcyclic}

Over the complex numbers the Hirzebruch-Jung singularities are well understood and all arise as cyclic quotient singularities. In this section, we show a similar result for two-dimensional normal singularities which are Hirzebruch-Jung in the sense that their minimal resolution is a string of projective lines defined over the residue field. 

More precisely, suppose that $R$ is a two-dimensional normal noetherian excellent commutative Hensel local domain with residue field $\kappa$. Suppose it is a {\em $\kappa$-rational Hirzebruch-Jung} singularity in the sense that it has a rational minimal resolution $f \colon X \to \Spec R$ such that the exceptional locus is a string $E_1, \ldots E_r$ of exceptional curves isomorphic to the projective line over $\kappa$, that is, all  $E_i \simeq \bP^1_{\kappa}$, $E_i$ intersects $E_{i+1}$ in a single point which is $\kappa$-rational and there are no other intersections. There are more complicated analogues of Hirzebruch-Jung singularities studied in the literature which we have not studied as they do not arise in the study of the terminal orders considered in this paper. 

Since $R$ is Hensel local, we may choose irreducible curves $E_0, E_{r+1} \subset Y$ such that $E_0$ (respectively $E_{r+1}$) intersects $E_1$ (respectively $E_r$) in a single $\kappa$-rational point and furthermore, $E_0$ and $E_2$ (respectively $E_{r+1}$ and $E_{r-1}$) are disjoint. Let $m_i = - E_i^2$. We define pairs $\nu_0 = (0,1), \nu_1 = (1,0)$ and then recursively 
\begin{equation}  \label{eq:definenu}
\nu_{i+1} = m_i \nu_i - \nu_{i-1}, \quad \text{for} \ i = 1, \ldots r. 
\end{equation}
When $R$ is defined over the complex numbers, these give the exceptional curves in the toric description of the Hirzebruch-Jung singularity. An easy induction shows that the dot product $(1,1).\nu_i$ weakly increases with $i$, so $(m,-k):= \nu_{r+1}$ satisfies $0<k<m$. Similarly, one can show that $k,m$ are relatively prime. The integer $m$ appears in our key theorem below.

\begin{theorem}  \label{thm:HJiscyclic}
Let $R$ be a $\kappa$-rational Hirzebruch-Jung singularity as defined above and $X = \Spec R$. The Weil divisor $f_*E_0$ is $m$-torsion in the sense that $mf_*E_0$ is Cartier. Furthermore, there is a natural ring structure on 
$$ S := \bigoplus_{l=0}^{m-1} \cO_X(-lf_*E_0) t^l$$
making it a regular local ring with the same residue field $\kappa$ as $R$. In particular, if $R$ contains $m$-th roots of unity, then $R$ is a cyclic quotient singularity. Furthermore, $S/R$ is \'etale away from the singular point. 
\end{theorem}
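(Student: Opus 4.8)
The plan is to read the divisor class group of $R$ off the resolution, identify $[f_*E_0]$ as a generator of a cyclic group of order $m$, build $S$ as the associated cyclic cover, and prove regularity by producing an explicit regular system of parameters from the two boundary curves $E_0,E_{r+1}$. Write $D:=f_*E_0$ and $D':=f_*E_{r+1}$. Since $X$ is regular and $f$ is a rational resolution, Lipman's theorem gives $\mathrm{Pic}(X)\cong\bZ^r$ via the multidegree $L\mapsto(\deg L|_{E_i})_i$ (each $E_i\cong\bP^1_\kappa$, so $\mathrm{Pic}(E_i)=\bZ$), and excision of the $E_i$, together with insensitivity of $\mathrm{Cl}$ to the codimension-two point, identifies $\mathrm{Cl}(R)\cong\mathrm{coker}(M)$, where $M=(E_i\cdot E_j)$ is the negative definite intersection matrix and $\cO_X(E_j)$ maps to the $j$-th column of $M$. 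The signed leading principal minors of the tridiagonal matrix $M$ obey the recursion (\ref{eq:definenu}) defining the first coordinates of the $\nu_i$, whence $|\det M|=m$; and since $M$ has unit off-diagonal entries its $(1,r)$-cofactor is $\pm1$, so $\mathrm{Cl}(R)\cong\bZ/m$ and the class $[\mathbf e_1]$ generates it. As $E_0\cdot E_i=\delta_{i1}$ (the intersection point being $\kappa$-rational), the strict transform identifies $[D]$ with $[\mathbf e_1]$, so $[D]$ generates $\mathrm{Cl}(R)$; in particular $mD$ is Cartier, which is the first assertion. Because $R$ is local, $mD$ is moreover principal, say $mD=\mathrm{div}(h)$, and I would give $S=\bigoplus_{l=0}^{m-1}\cO_R(-lD)t^l$ (the $R$-algebra of global sections, since $f_*\cO_X(-lf_*E_0)=\cO_R(-lD)$) the multiplication induced by $\cO_R(-lD)\cdot\cO_R(-l'D)\subseteq\cO_R(-(l+l')D)$ inside $K$, using $\cO_R(-mD)=hR\cong R$ to fold the grading back into $\{0,\dots,m-1\}$.

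Next I would record the basic properties of $S$. On the punctured spectrum $U=\Spec R\setminus\{\frakm\}$, which is regular, $D|_U$ is Cartier, so locally $D=\mathrm{div}(g)$ and $h=(\mathrm{unit})g^m$; setting $s=gt$ exhibits $S$ locally as $\cO_U[s]/(s^m-\mathrm{unit})$, finite étale once $m$ is invertible. This proves $S/R$ is étale away from the singular point, and shows $S$ is normal (reflexive, and étale in codimension one over the regular $U$). Connectedness of this degree-$m$ étale cover of $U$ is equivalent to $[D]$ having order exactly $m$, which we proved; since $R$ is Henselian it follows that $S$ is local.

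The heart of the proof is regularity. Both $[D]$ and $[D']$ lie in $\mathrm{Cl}(R)=\bZ/m=\langle[D]\rangle$, and the cyclic cover tautologically trivializes the class it is built from, so the pullbacks along $p\colon\Spec S\to\Spec R$ are principal; write $p^*D=\mathrm{div}(\bar u)$, $p^*D'=\mathrm{div}(\bar v)$ with $\bar u,\bar v\in\frakm_S$ (both curves pass through the unique branch point). Crucially, $p$ is étale at the generic points of $D$ and $D'$, so these pullbacks carry \emph{no} hidden multiplicity: $\bar u,\bar v$ are reduced. I would then compute the intersection number, taken throughout as a $\kappa$-dimension. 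Downstairs the projection formula on $X$ gives $(D\cdot D')_R=(f^*D\cdot E_{r+1})=a_r$, where $f^*D=E_0+\sum_i a_iE_i$ solves $M\mathbf a=-\mathbf e_1$; hence $a_r=-(M^{-1})_{r1}=\pm1/m$, the relevant cofactor of $M$ being $\pm1$. Multiplicativity of intersection numbers under the finite degree-$m$ map $p$ then yields $\dim_\kappa\!\big(S/(\bar u,\bar v)\big)=m\cdot(D\cdot D')_R=m\cdot\tfrac1m=1$. Therefore $S/(\bar u,\bar v)=\kappa$, so $\frakm_S=(\bar u,\bar v)$ is two-generated with residue field $\kappa$, and the two-dimensional local ring $S$ is regular. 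The main obstacle is making this passage rigorous in the arithmetic setting: one must verify that $p^*D,p^*D'$ really are the reduced principal divisors $\mathrm{div}(\bar u),\mathrm{div}(\bar v)$ (this is where étaleness over $U$ is indispensable, ruling out the multiplicity phenomenon that occurs for covers ramified along the divisor), and that multiplicativity of the $\kappa$-valued intersection form transports the fractional number $1/m$ on the singular base to the honest value $1$ on $S$; the hypotheses that the $E_i$ are $\bP^1_\kappa$ with $\kappa$-rational intersection points are exactly what pin the residue field of the resulting point to $\kappa$.

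Finally, if $R$ contains a primitive $m$-th root of unity $\zeta$ (so $m$ is invertible), then $\mu_m$ acts on $S$ by $t\mapsto\zeta t$ with fixed ring the degree-zero part $S^{\mu_m}=R$. Thus $R=S^{\bZ/m}$ is the quotient of the regular ring $S$ by a cyclic group acting freely in codimension one, i.e. a cyclic quotient singularity, completing the proof.
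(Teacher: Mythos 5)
Your route is genuinely different from the paper's, and it converges on the same two elements. The paper never computes the class group: it develops a combinatorial theory of \emph{toral} functions and divisors (Propositions~\ref{prop:toralfunctions}, \ref{prop:toralbasisexists}, \ref{prop:toralgenforreflexive}), uses Lipman's theory of contracted ideals to show the relevant reflexive ideals are generated by toral functions, and then exhibits two explicit toral generators $f_1=x^{k/m}y$ and $f_2=x^{1/m}$ of the maximal ideal $\frakn$ of $S$, whence regularity. Your $\bar u,\bar v$ are exactly these elements: $f_1$ generates the pullback of $f_*E_0$ (Lemma~\ref{lem:mthPowerf1}), and $f_2$ generates the pullback of $f_*E_{r+1}$ since $f_2^m=x$ has divisor $mf_*E_{r+1}$. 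What your approach buys is conceptual economy plus the by-product $\mathrm{Cl}(R)\cong\bZ/m$ generated by $[f_*E_0]$ (which subsumes the $m$-torsion claim); what the paper's approach buys is the explicit monomial coordinate system on $S$, which is not a luxury: $f_1$ and $f_2$ reappear as the normal elements $z$ and $y$ in the proof of Theorem~\ref{thm:mainHJ}, so the downstream structure theory needs the explicit description, not just abstract regularity. Note also that you need surjectivity of the multidegree map $\mathrm{Pic}(X)\to\bZ^r$, which is strictly more than the injectivity statement \cite[Proposition~11.1 i)]{Lip} that the paper cites; surjectivity does hold for rational singularities but must be invoked separately.

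The one real soft spot is the step you flag yourself: the identity $\dim_\kappa S/(\bar u,\bar v)=m\cdot(D\cdot D')_R$. There is no off-the-shelf projection formula for Mumford's $\bQ$-valued intersection theory under finite morphisms in this excellent/arithmetic setting, so as written this is a gap, though a fillable one. A clean way to close it, consistent with your reductions: since $\bar u^m$ and $\bar v^m$ are, up to units, functions $h,h'\in R$ with $\mathrm{div}_R(h)=mD$ and $\mathrm{div}_R(h')=mD'$, and since $\bar u,\bar v$ is a regular sequence on the Cohen--Macaulay ring $S$, one gets $\dim_\kappa S/(\bar u,\bar v)=\tfrac{1}{m^2}\dim_\kappa S/(h,h')$; then $\dim_\kappa S/(h,h')=m\dim_\kappa R/(h,h')$ follows from flatness of $S$ over $R$ at the codimension-one points of $D'$ (this is where \'etaleness in codimension one enters, as you anticipated), and $\dim_\kappa R/(h,h')=m$ follows from the projection formula for the resolution $f$ (standard for proper birational morphisms to a normal surface) together with your cofactor computation $a_r=1/m$. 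Two smaller corrections: the claimed \emph{equivalence} ``connected $\Leftrightarrow$ $[D]$ has order exactly $m$'' is false --- a cover built from a trivial class and a unit that is not an $m$-th power is connected --- but only the direction you need holds and suffices: order $m$ forces $t^m-h^{-1}$ to be irreducible over $K$ (for $m$ invertible), hence $S$ is a domain, hence local over the Henselian $R$. And \'etaleness over the punctured spectrum genuinely requires $m$ to be invertible in $R$; this hypothesis is implicit in the theorem, and the paper's own proof shares it.
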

Before launching in to the proof, we set up the appropriate theory first. The idea is to recover as much of the toric theory of Hirzebruch-Jung singularities over the complex numbers as possible. To this end we consider
\begin{definition}  \label{def:toral}
A divisor $D$ on $Y$ is {\em toral} if it belongs to $\bigoplus_{i=0}^{r+1} \bZ E_i$. We say $w \in R$ is {\em toral} or is a {\em toral function} if the associated divisor of $f^*w$ is toral.
\end{definition}
\begin{remark}
Note that the definition depends on the choice of $E_0$ and $E_{r+1}$. 
\end{remark}
Our first order of business is to classify all toral functions and show their divisors on $Y$ are given by the lattice points in the cone $\bR_{\geq 0} (0,1) + \bR_{\geq 0} (m,-k)$ where we recall $(m,-k) = \nu_{r+1}$. Note that any function $w \in R$ is determined, up to $H^0(\cO_Y^{\times}) = R^{\times}$, by their divisor $(f^*w)$ on $Y$. It thus suffices to classify effective toral divisors $D$ on $Y$ such that $D \sim 0$. Now $f$ is a rational resolution, so by \cite[Proposition~11.1 i)]{Lip}, we know $D \sim 0$ if and only if $D.E_i = 0$ for $i = 1,\ldots, r$. 

To enumerate all such toral divisors, we work as follows. Let $L = \bigoplus_{i=0}^{r+1} \bZ E_i$ and consider $L^* := \Hom_{\bZ}(L,\bZ) = \bigoplus \bZ E_i^*$ where $\{E_i^*\}$ is a dual basis to the $E_i$. For $i=1,\ldots, r$, let 
$E_i^{\vee}:=(C \mapsto E_i.C) \in L^*$ and $\bE < L^*$ be the subgroup generated by the $E_i^{\vee}$. The next result follows from equation (\ref{eq:definenu}).
\begin{proposition}  \label{prop:nufunction}
The homomorphism $\nu \colon L^* \to \bZ^2$ defined by  $E_i^* \mapsto \nu_i$ is surjective with kernel $\bE$. 
\end{proposition}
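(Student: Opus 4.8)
The plan is to compute everything explicitly in the dual basis $\{E_i^*\}$ and then to exhibit a convenient alternative basis of $L^*$ on which $\nu$ becomes transparent. First I would record the intersection numbers of the string: for $i = 1, \ldots, r$ we have $E_i . E_{i-1} = E_i . E_{i+1} = 1$, $E_i^2 = -m_i$, and $E_i . E_j = 0$ whenever $|i-j| \ge 2$ (here the hypotheses that $E_0, E_2$ and $E_{r-1}, E_{r+1}$ are disjoint guarantee that this tridiagonal pattern persists at the two ends). Expanding $E_i^\vee = (C \mapsto E_i . C)$ in the dual basis therefore gives
$$ E_i^\vee = E_{i-1}^* - m_i E_i^* + E_{i+1}^*, \qquad i = 1, \ldots, r. $$
Applying $\nu$ and invoking the defining recursion (\ref{eq:definenu}) yields $\nu(E_i^\vee) = \nu_{i-1} - m_i \nu_i + \nu_{i+1} = 0$, so $\bE \subseteq \ker\nu$. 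Surjectivity is equally immediate, since $\nu(E_1^*) = \nu_1 = (1,0)$ and $\nu(E_0^*) = \nu_0 = (0,1)$ already span $\bZ^2$.

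The substance of the statement is the reverse inclusion $\ker\nu \subseteq \bE$, that is, that $\bE$ is not merely a finite-index subgroup of $\ker\nu$ but equals it. I would handle this by a descending elimination. Given $\phi = \sum_{j=0}^{r+1} c_j E_j^* \in \ker\nu$, observe that $E_{r+1}^*$ occurs among the $E_i^\vee$ only in $E_r^\vee$, with coefficient $1$; subtracting $c_{r+1} E_r^\vee$ clears the $E_{r+1}^*$-coefficient over $\bZ$. The leading term $+E_{i+1}^*$ of each $E_i^\vee$, having coefficient exactly $1$, lets me repeat this, clearing the coefficients of $E_{r+1}^*, E_r^*, \ldots, E_2^*$ in turn by subtracting integer multiples of $E_r^\vee, E_{r-1}^\vee, \ldots, E_1^\vee$. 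The crucial point is that at each stage the required multiple is \emph{forced} to be an integer, which is exactly what upgrades ``finite index'' to ``equality''. After this reduction $\phi$ has been replaced, modulo $\bE$, by some $\psi = c_0' E_0^* + c_1' E_1^* \in \ker\nu$. Applying $\nu$ gives $0 = c_0' \nu_0 + c_1' \nu_1 = (c_1', c_0')$, so $\psi = 0$ and $\phi \in \bE$.

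The main obstacle is precisely this integrality/saturation step, and the whole argument can be packaged cleanly as the assertion that $\{E_0^*, E_1^*, E_1^\vee, \ldots, E_r^\vee\}$ is a $\bZ$-basis of $L^*$: ordering the generators with the $E_i^\vee$ descending and expanding along the unique nonzero entry in row $E_{r+1}^*$, then row $E_r^*$, and so on, shows the change-of-basis matrix is triangular with $\pm 1$ on the diagonal, hence unimodular. Relative to this basis $\nu$ is literally the coordinate projection $E_0^* \mapsto (0,1)$, $E_1^* \mapsto (1,0)$, $E_i^\vee \mapsto 0$, from which both surjectivity and $\ker\nu = \bE$ fall out at once. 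I would present whichever of the two formulations, the elimination or the explicit basis, reads more cleanly, but both rest on the same unimodular ``staircase'' coming from the $1$'s on the off-diagonal of the intersection matrix.
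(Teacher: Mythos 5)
Your proof is correct and is essentially the argument the paper intends: the paper's entire justification is the one-line remark that the proposition ``follows from equation (\ref{eq:definenu})'', i.e.\ from $\nu_{i+1} - m_i\nu_i + \nu_{i-1} = 0$, which is exactly your computation $\nu(E_i^{\vee}) = 0$, with surjectivity coming from $\nu_0, \nu_1$ spanning $\bZ^2$. The one genuinely nontrivial point — the saturation step $\ker\nu \subseteq \bE$, which you settle via the unimodular staircase (equivalently, the elimination using the off-diagonal $1$'s) — is left implicit in the paper, so your write-up is a faithful and more complete rendering of the same approach.
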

The $D\sim 0$ condition thus corresponds to the fact that the naturally induced map $ L^* \to \bZ$ given by  $\chi \mapsto \chi(D)$ actually factors through $\nu$ to give a homomorphism $\lambda_D \colon \bZ^2 \to \bZ$. Note that 
\begin{equation}  \label{eq:coeffD}
 D = \sum_{i=0}^{r+1} \lambda_D (\nu_i) E_i.
\end{equation}
Suppose now that $\lambda_D$ is given by dot product with $(i,j) \in \bQ^2$. Now $\nu_0 = (0,1), \nu_1 = (1,0)$ so $i,j \in \bZ$. The divisor $D$ corresponding to $(i,j)$ is effective when $(i,j).\nu_0\geq 0, (i,j).\nu_{r+1} \geq 0$, that is $(i,j)$ lies in the cone $\bR_{\geq 0} (1,0) + \bR_{\geq 0} (k,m)$. We now abuse notation and write $x^iy^j$ for any toral function with this $D$ as its divisor in $Y$. The notation allows us to write $x^iy^jx^{i'}y^{j'} = x^{i+i'}y^{j+j'}$ with the caveat that it holds only modulo $R^\times$. We sumarize the results up to this point.

\begin{proposition}  \label{prop:toralfunctions}
The toral functions are $x^iy^j$ where $(i,j) \in \bR_{\geq 0} (1,0) + \bR_{\geq 0} (k,m)$. Its divisor in $Y$ is $\sum \lambda_l  E_l$ where $\lambda_l = (i,j). \nu_l$. 
\end{proposition}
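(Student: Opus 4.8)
The plan is to classify toral functions by classifying their divisors, using that a toral function $w\in R$ is determined up to $R^\times$ by the toral divisor $D=(f^*w)$ on $Y$, which must be \emph{effective} (since $w$ is regular and $H^0(Y,\cO_Y)=R$) and \emph{principal}, hence $D\sim 0$. Conversely, any effective toral $D$ with $D\sim 0$ is the divisor of a toral function, unique up to $R^\times$. So the task reduces to enumerating the effective toral divisors $D$ with $D\sim 0$, and the notation $x^iy^j$ is justified by the fact that products of functions add divisors.

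First I would pin down the condition $D\sim 0$. Since $f$ is a rational resolution, \cite[Proposition~11.1 i)]{Lip} gives $D\sim 0\iff D\cdot E_i=0$ for $i=1,\dots,r$. Writing the evaluation functional $\phi_D\in\Hom(L^*,\bZ)$, $\phi_D(\chi)=\chi(D)$, and noting $\phi_D(E_i^\vee)=D\cdot E_i$, this says precisely that $\phi_D$ vanishes on $\bE$. By Proposition~\ref{prop:nufunction}, $\bE=\ker\nu$ with $\nu$ surjective, so $\phi_D$ factors as $\phi_D=\lambda_D\circ\nu$ for a unique homomorphism $\lambda_D\colon\bZ^2\to\bZ$. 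Evaluating at $\chi=E_l^*$ recovers the coefficient of $E_l$ in $D$ as $\lambda_D(\nu_l)$, which is formula (\ref{eq:coeffD}). As $\nu_0=(0,1),\nu_1=(1,0)$ form a $\bZ$-basis of $\bZ^2$, the functional $\lambda_D$ is dot product with a well-defined integer vector $(i,j)\in\bZ^2$, so each coefficient equals $(i,j)\cdot\nu_l$; this is the asserted divisor formula.

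It remains to characterize effectivity. By the formula, $D$ is effective iff $(i,j)\cdot\nu_l\geq 0$ for every $l=0,\dots,r+1$. The crux, and the step I expect to require the most care, is to show this is equivalent to the two extreme inequalities $(i,j)\cdot\nu_0\geq 0$ and $(i,j)\cdot\nu_{r+1}\geq 0$, i.e.\ to $(i,j)$ lying in the dual cone $\bR_{\geq 0}(1,0)+\bR_{\geq 0}(k,m)$. One direction is immediate; for the other I must show that every $\nu_l$ lies in $\bR_{\geq 0}\nu_0+\bR_{\geq 0}\nu_{r+1}$, so that nonnegativity on the two generators forces it on all $\nu_l$. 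Writing $\nu_l=(a_l,b_l)$, cone membership amounts to $a_l\geq 0$ and $g_l:=ka_l+mb_l\geq 0$; the former is clear from the recursion (\ref{eq:definenu}) with $m_i\geq 2$, while $g_l\geq 0$ follows from the same recursion $g_{l+1}=m_lg_l-g_{l-1}$ together with the boundary values $g_0=m>0$ and $g_{r+1}=0$, since a minimal index with $g_l<0$ would make the sequence strictly decrease thereafter and never return to $0$, contradicting $g_{r+1}=0$. Finally, translating the dual-cone inequalities $j\geq 0$ and $mi-kj\geq 0$ through the parametrization $(i,j)=s(1,0)+t(k,m)$ identifies the toral functions with the lattice points of $\bR_{\geq 0}(1,0)+\bR_{\geq 0}(k,m)$, as claimed.
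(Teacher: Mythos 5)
Your proof is correct and follows essentially the same route as the paper's (which is the discussion preceding the proposition rather than a separate proof environment): reduce to classifying effective toral divisors $D\sim 0$ via Lipman's criterion, factor the evaluation functional through $\nu$ using Proposition~\ref{prop:nufunction} to get an integer vector $(i,j)$, and identify effectivity with membership of $(i,j)$ in the cone $\bR_{\geq 0}(1,0)+\bR_{\geq 0}(k,m)$. The one genuine addition is that you actually prove the effectivity equivalence --- showing every $\nu_l$ lies in $\bR_{\geq 0}\nu_0+\bR_{\geq 0}\nu_{r+1}$ via the recursion, the positivity of the first coordinates, and the sign argument for $g_l=(k,m).\nu_l$ with boundary values $g_0=m$, $g_{r+1}=0$ --- a step the paper asserts without argument, so this is a useful detail to have on record.
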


\begin{corollary}  \label{cor:mtorsionWeildiv}
The Weil divisor $f_*E_0$ is $m$-torsion.
\end{corollary}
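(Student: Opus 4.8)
The plan is to exhibit a single explicit toral function whose divisor on $\Spec R$ equals $m f_* E_0$. Since $R$ is local, a Weil divisor is Cartier exactly when it is principal, so producing such a function proves $m f_* E_0$ is Cartier and hence that $f_* E_0$ is $m$-torsion in the class group. The only input needed is Proposition~\ref{prop:toralfunctions} together with the observation that $f$ contracts the exceptional curves $E_1, \ldots, E_r$. Concretely, a toral function $x^i y^j$ with $(i,j)$ in the cone $\bR_{\geq 0}(1,0) + \bR_{\geq 0}(k,m)$ has divisor $\sum_l \bigl((i,j).\nu_l\bigr) E_l$ on $Y$ by Proposition~\ref{prop:toralfunctions} (equivalently (\ref{eq:coeffD})), and since only $E_0$ and $E_{r+1}$ survive pushforward, its divisor on $\Spec R$ is $\bigl((i,j).\nu_0\bigr) f_* E_0 + \bigl((i,j).\nu_{r+1}\bigr) f_* E_{r+1}$.

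First I would select $(i,j)$ so that this pushforward is the desired multiple of $f_* E_0$ with no $f_* E_{r+1}$ contribution, i.e.\ so that $(i,j).\nu_0 = m$ and $(i,j).\nu_{r+1} = 0$. Using $\nu_0 = (0,1)$ and $\nu_{r+1} = (m,-k)$, the first equation forces $j = m$ and the second reads $im - jk = 0$, hence $i = k$. Thus the correct lattice point is $(i,j) = (k,m)$, which is precisely the primitive generator of the second ray of the cone; in particular it is an integral point lying in the cone, so the corresponding $w := x^k y^m$ is a genuine effective toral function.

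It then only remains to read off the divisor of $w$ on $\Spec R$. By construction $(k,m).\nu_0 = m$ and $(k,m).\nu_{r+1} = (k,m).(m,-k) = km - mk = 0$, so the pushforward of the divisor of $w$ is exactly $m f_* E_0$. Hence $m f_* E_0$ is principal, and $f_* E_0$ is $m$-torsion as claimed.

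I do not expect a genuine obstacle here: the content is entirely the dot-product bookkeeping already set up by Proposition~\ref{prop:toralfunctions}. The one point worth stating carefully is the identification of $(k,m)$ as the lattice point whose pairing with $\nu_{r+1} = (m,-k)$ vanishes, which is what guarantees the $E_{r+1}$-coefficient drops out; this is immediate since $(k,m)$ spans the ray of the cone orthogonal to $\nu_{r+1}$, and the remaining coefficient $(k,m).\nu_0 = m$ then gives the torsion order directly.
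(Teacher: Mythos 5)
Your proof is correct and is essentially the paper's own argument: both exhibit the toral function $x^k y^m$, whose divisor on $Y$ has coefficient $m$ on $E_0$ and $0$ on $E_{r+1}$ by Proposition~\ref{prop:toralfunctions}, so its pushforward is the principal divisor $m f_* E_0$. You merely spell out the dot-product derivation of the lattice point $(k,m)$ that the paper leaves implicit.
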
 
\begin{proof}
The divisor of zeros $\sum \lambda_l  E_l$ of the toral function $x^ky^m$  has $\lambda_0 = m, \lambda_{r+1} = 0$.
\end{proof}

Let $\frakm$ be the maximal ideal of $R$ and $Z$ be the fundamental cycle so $\frakm^n = f_* \cO(-nZ)$ \cite[Lemma~9.4.14]{Liu02}. Note that $Z = E_1 + \ldots + E_r$. Consider a Weil divisor $C \subset \Spec R$ so $\cO(-C)$ is a reflexive ideal. Note that $f^*\cO(-C)/\cT \simeq \cO_Y(-\tilde{C})$ for some $\frakm$-torsion sheaf $\cT$ and Cartier divisor $\tilde{C}$ which is the strict transform of $C$ away from the exceptional locus. 
\begin{definition} \label{def:induceddiv}
We call $\tilde{C}$ the {\em pullback} of $C$. This agrees with the usual pullback in the case that $C$ is Cartier.
\end{definition}

Note that reflexivity ensures that $f_*\cO_Y(-\Ctilde) = \cO(-C)$ so $\cO(-C)$ is {\em contracted} in the language of \cite[Definition~6.1]{Lip}. Suppose more generally that $D \in \text{Div} Y$ is such that $\cO_Y(-D)$ is generated by global sections so \cite[Corollary to 7.3]{Lip} ensures that $\frakm^n f_*\cO_Y(-D) = f_*\cO_Y(-D - nZ)$. 
Applying $f_*$ to the exact sequence
$$
0 \to \cO_Y(-D - Z) \to \cO_Y(-D) \to \cO_Z(-D) \to 0 
$$
gives the exact sequence
$$
0 \to f_*\cO(-D) \otimes_R R/\frakm \to H^0(\cO_Z(-D)) \to R^1f_* \cO_Y(-D - Z).
$$
Now $\cO_Y(- D - Z)$ is generated by global sections since the same is true of $\cO_Y(-D)$ and $\cO_Y(-Z)$, so $R^1f_* \cO_Y(-D - Z)) = 0$ from which follows the next result.
\begin{lemma}  \label{lem:reflexivegen}
Let $D$ be a divisor on $Y$ such that $\cO_Y(-D)$ is generated by global sections. Write $I = f_*\cO_Y(-D)$ which will be an ideal in $R$ if $D$ is effective. Then we have $I \otimes_R R/\frakm \simeq H^0(\cO_Z(-D))$. In particular, a set of generators for $I$ can be found by giving a set of global sections of $\cO_Y(-D)$ whose restriction to $Z$ gives a spanning set for $H^0(\cO_Z(-D))$. This applies in particular to $I = \cO(-C)$ where $C$ is an effective Weil divisor on $X$ and $D = \Ctilde$ is the pullback. 
\end{lemma}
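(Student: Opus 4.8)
The plan is to read the claimed isomorphism off the four-term exact sequence assembled in the paragraph immediately preceding the statement, convert it into the generation statement via Nakayama's lemma, and then verify that the pullback $\Ctilde$ of an effective Weil divisor satisfies the global-generation hypothesis so that the general assertion specializes to the stated one. The only genuinely delicate ingredient is the vanishing $R^1 f_*\cO_Y(-D-Z)=0$; but this is already supplied just above (from the global generation of $\cO_Y(-D-Z)$ together with the rationality of $f$), so the proof of the lemma itself is essentially an assembly of facts established immediately above. The points requiring care are that global generation is inherited under tensor product and under the pullback-and-quotient construction defining $\Ctilde$.

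First I would record the isomorphism $I \otimes_R R/\frakm \simeq H^0(\cO_Z(-D))$. Applying $f_*$ to $0 \to \cO_Y(-D-Z) \to \cO_Y(-D) \to \cO_Z(-D) \to 0$ gives a long exact sequence. The case $n=1$ of \cite[Corollary to 7.3]{Lip} identifies $f_*\cO_Y(-D-Z) = \frakm\, f_*\cO_Y(-D) = \frakm I$, so the leftmost nonzero term becomes $I/\frakm I = I \otimes_R R/\frakm$; the term $f_*\cO_Z(-D)$ equals $H^0(\cO_Z(-D))$ since $Z$ is contracted to the closed point; and the vanishing $R^1 f_*\cO_Y(-D-Z)=0$, valid because $\cO_Y(-D-Z)=\cO_Y(-D)\otimes\cO_Y(-Z)$ is a tensor product of globally generated invertible sheaves, upgrades the injection $I \otimes_R R/\frakm \hookrightarrow H^0(\cO_Z(-D))$ to an isomorphism. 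The same vanishing makes the map $f_*\cO_Y(-D)\to H^0(\cO_Z(-D))$ surjective.

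Next I would deduce the generation statement. When $D$ is effective, $\cO_Y(-D)\subseteq\cO_Y$ gives $I \subseteq f_*\cO_Y = \cO_X = R$, so $I$ is an ideal. Since $I$ is a finitely generated module over the Noetherian local ring $(R,\frakm)$, Nakayama's lemma says a finite subset of $I$ generates $I$ if and only if its image generates $I/\frakm I$. Choosing global sections of $\cO_Y(-D)$ whose restrictions to $Z$ span $H^0(\cO_Z(-D))$ produces, via the surjection above, elements of $I$ whose images span $I/\frakm I \simeq H^0(\cO_Z(-D))$; hence these elements generate $I$.

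Finally I would verify the application to $I = \cO(-C)$ with $D = \Ctilde$. The only hypothesis to check is that $\cO_Y(-\Ctilde)$ is generated by global sections. This is automatic: since $X=\Spec R$ is affine, the reflexive ideal $\cO(-C)$ is generated by its global sections, so $f^*\cO(-C)$ is generated by the pullbacks of these sections, and its quotient $\cO_Y(-\Ctilde)$ by the $\frakm$-torsion $\cT$ is therefore globally generated as well. Combined with $f_*\cO_Y(-\Ctilde)=\cO(-C)$, which is the contractedness already noted from reflexivity, this shows the general statement applies verbatim to $C$.
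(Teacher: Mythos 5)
Your proposal is correct and follows essentially the same route as the paper: the paper's argument is precisely the paragraph preceding the lemma, namely the exact sequence obtained by applying $f_*$ to $0 \to \cO_Y(-D-Z) \to \cO_Y(-D) \to \cO_Z(-D) \to 0$, the identification $\frakm\, f_*\cO_Y(-D) = f_*\cO_Y(-D-Z)$ from \cite[Corollary to 7.3]{Lip}, the vanishing of $R^1f_*\cO_Y(-D-Z)$ via global generation and rationality, and then Nakayama. Your explicit check that $\cO_Y(-\Ctilde)$ is globally generated (as a quotient of $f^*\cO(-C)$ with $X$ affine) is left implicit in the paper, and spelling it out is a reasonable small addition rather than a different approach.
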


We will use this lemma to find toral generators for $\cO(-if_*E_0)$. First, we need a ``toral' basis for $H^0(Z, \cL)$ where $\cL$ is a line bundle on $Z$ with all $d_i := \deg_{E_i} \cL \geq 0$. 
\begin{definition}  \label{def:basictoral}
The intersections $E_i \cap E_{i+1}, i=0, \ldots, r$ are said to be the {\em toral points} of $Z$. A non-zero section $s \in H^0(Z, \cL)$ is {\em toral} if its zero set is a union of exceptional curves and toral points. Furthermore, we say $s$ is {\em basic toral} if it also satisfies the following condition: whenever $s|_{E_i} \neq 0$ but has a zero at $E_i \cap E_{i+1}$ (respectively $E_i \cap E_{i-1}$), then $s|_{E_j} = 0$ for $j>i$ (respectively $j<i$).  
\end{definition}
We can construct basic toral sections as follows. Start with some non-zero section $s_i \in H^0(E_i, \cL|_{E_i}) \simeq H^0(\bP^1, \cO(d_i))$ which is ``toral'' in the sense that its zeros are confined to $E_{i-1} \cup E_{i+1}$. Note that up to a scalar in $\kappa$, there are $d_i + 1$ of these. We show it can be extended uniquely to a basic toral section $s$ of $\cL$. Now if $s_i$ has a zero at $E_{i-1} \cap E_i$, then we simply extend by setting $s|_{E_j} = 0$ for $j < i$. If on the other hand $s_i$ is non-zero at $E_{i-1} \cap E_i$, then there is a unique way to extend it to a toral section on $E_{i-1}$ and we can continue by induction. A similar argument determines $s$ on $E_j$ for  $j>i$.
This gives the following

\begin{lemma}  \label{lem:classifybasictoral}
Any basic toral section $s$ is uniquely determined by any non-zero restriction  $s|_{E_l}$ and has the form constructed in the preceding paragraph. 
\end{lemma}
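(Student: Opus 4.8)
The plan is to exploit the description of a global section of $\cL$ on the chain $Z = E_1 \cup \cdots \cup E_r$ as a tuple $(s_1,\ldots,s_r)$, where $s_j := s|_{E_j} \in H^0(E_j,\cL|_{E_j}) \simeq H^0(\bP^1,\cO(d_j))$, subject to the gluing condition that $s_j$ and $s_{j+1}$ take the same value in the one-dimensional fibre of $\cL$ over each shared node $E_j \cap E_{j+1}$. The essential elementary fact I would isolate first is that a non-zero toral section on a single $E_j$ is, up to a scalar in $\kappa^{\times}$, determined by how its $d_j$ zeros (counted with multiplicity) are distributed between its two neighbouring toral points; in particular, if such a section is non-zero at one of these points, then all $d_j$ of its zeros lie at the other, and the section is then pinned down completely once its value at the non-vanishing node is prescribed.

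Granting a non-zero restriction $s_l = s|_{E_l}$, I would propagate the data to the right, the left being symmetric. Consider the node $E_l \cap E_{l+1}$ and distinguish two cases. If $s_l$ is non-zero there, then so is $s_{l+1}$ by the gluing condition; torality then forces all $d_{l+1}$ zeros of $s_{l+1}$ to sit at $E_{l+1} \cap E_{l+2}$, and the common value at $E_l \cap E_{l+1}$ fixes the remaining scalar, so $s_{l+1}$ is uniquely determined. If instead $s_l$ vanishes at $E_l \cap E_{l+1}$, the gluing condition alone tells us only that $s_{l+1}$ vanishes there too, which does \emph{not} determine it; here the \emph{basic} hypothesis intervenes decisively, forcing $s_j = 0$ for all $j > l$. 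Iterating, every $s_j$ with $j>l$ is determined, and the symmetric argument to the left determines every $s_j$ with $j<l$. Thus $s$ is recovered from $s_l$ alone.

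The procedure just described is precisely the inductive extension of the preceding paragraph, begun from the seed $s_i := s|_{E_i}$ at any index $i$ with $s|_{E_i}\neq 0$ (such an $i$ exists since a non-zero section cannot restrict to zero on every component), which shows that $s$ has the asserted form. The main obstacle, and indeed the crux of the lemma, is the vanishing case above: compatibility at a node is genuinely insufficient to determine a neighbouring restriction, and it is only the basic-torality condition that removes this ambiguity by collapsing the remainder of the chain to zero. Once this point is isolated, the rest is a routine propagation along the linearly ordered chain.
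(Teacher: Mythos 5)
Your proof is correct and takes essentially the same route as the paper: the paper's justification is exactly the constructive paragraph preceding the lemma, which propagates a non-zero toral restriction along the chain, using the unique toral extension when the section is non-zero at a node and invoking the basic condition to force vanishing of all further components when it is zero there. Your two-case analysis at each node is precisely that induction, so there is nothing missing.
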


\begin{proposition}  \label{prop:toralbasisexists}
Given a line bundle $\cL$ on $Z$ with non-negative degrees $d_i:= \deg_{E_i} \cL \geq 0$, there exists a basis for $H^0(Z, \cL)$ consisting of basic toral sections. Furthermore, this basis is unique up to scaling the basis elements. 
\end{proposition}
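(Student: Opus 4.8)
The plan is to induct on the length $r$ of the chain, peeling off the last component $E_r$ at each step. I would prove simultaneously that (a) $\dim_\kappa H^0(Z,\cL) = \sum_i d_i + 1$, (b) the basic toral sections constructed in the paragraph before Lemma~\ref{lem:classifybasictoral} are linearly independent and number exactly $\sum_i d_i + 1$, and (c) every basic toral section arises in this way. Granting these, (a) and (b) show the constructed sections form a basis, while (c) shows the set of basic toral sections up to scaling has size exactly $\dim_\kappa H^0(Z,\cL)$; hence any basis of $H^0(Z,\cL)$ consisting of basic toral sections must consist precisely of these sections, each up to a scalar, which is the asserted uniqueness.

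First I would record the dimension count. Since $Z$ is a chain of $r$ copies of $\bP^1_\kappa$ glued at its $r-1$ interior nodes, the normalization sequence $0 \to \cL \to \bigoplus_{i=1}^r \cL|_{E_i} \to \bigoplus_{\text{nodes}} \kappa \to 0$, together with $H^1(E_i,\cL|_{E_i}) = H^1(\bP^1,\cO(d_i)) = 0$ (valid as each $d_i \geq 0$) and the surjectivity of evaluation at each node, yields $H^1(Z,\cL)=0$ and $\dim_\kappa H^0(Z,\cL) = \sum_i (d_i+1) - (r-1) = \sum_i d_i + 1$.

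Now the induction. In the base case $r=1$ we have $Z = \bP^1_\kappa$ and $\cL = \cO(d_1)$; the basic toral sections are the monomials in the two coordinates vanishing at the toral points $E_0 \cap E_1$ and $E_1 \cap E_2$, of which there are $d_1+1$, and these manifestly form the unique monomial basis. For the inductive step, write $Z = Z' \cup E_r$ with $Z' = E_1 \cup \cdots \cup E_{r-1}$ meeting $E_r$ at the node $q := E_{r-1} \cap E_r$, and classify a basic toral section $s$ by its restriction $s|_{E_r}$, which (being toral) is either $0$ or a monomial $X^\alpha Y^\beta$, $\alpha+\beta = d_r$, in coordinates vanishing at $q$ and at $E_r \cap E_{r+1}$. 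If $s|_{E_r}=0$, then continuity forces $s|_{Z'}$ to vanish at $q$ and to be a basic toral section of $\cL|_{Z'}$; conversely each such extends by zero. If $s|_{E_r}=X^\alpha Y^\beta$ with $\alpha \geq 1$, then $s|_{E_r}$ vanishes at $q$, so the basic toral condition forces $s$ to be supported on $E_r$ alone, giving $d_r$ sections. The remaining case $\alpha=0$, that is $s|_{E_r}=Y^{d_r}$ nonzero at $q$, extends uniquely leftward by Lemma~\ref{lem:classifybasictoral} to a basic toral section of $\cL|_{Z'}$ nonzero at $q$. Thus the basic toral sections of $\cL$ biject with the $d_r$ sections supported on $E_r$ together with all basic toral sections of $\cL|_{Z'}$, giving the recursive count $\sum_{i<r} d_i + 1 + d_r = \sum_i d_i + 1$.

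Finally, linear independence follows by the same peeling: in a vanishing linear combination, restriction to $E_r$ kills the coefficients of the sections meeting $E_r$ (their restrictions are the distinct monomials on $E_r$, hence independent), and the remaining terms are supported on $Z'$ and vanish by the inductive hypothesis. Combined with the dimension count this gives a basis, and exhaustiveness of the enumeration gives uniqueness up to scaling as explained. The main obstacle I anticipate is the careful bookkeeping at the node $q$: one must check that the basic toral property is correctly preserved and reflected when passing between $Z$ and $Z'$ — in particular that $q$, an interior node of $Z$, plays the role of an endpoint toral point for $Z'$ — and that the enumeration is genuinely exhaustive, leaving no stray basic toral sections outside the list.
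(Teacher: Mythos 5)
Your argument is correct and shares the paper's skeleton --- the dimension count $\dim_\kappa H^0(Z,\cL)=\sum_i(d_i+1)-(r-1)$ via restriction to the components, an enumeration of the basic toral sections up to scalar, and linear independence of pairwise non-proportional ones --- but where the paper argues globally, you induct on the length of the chain. The paper gets independence in one stroke from Lemma~\ref{lem:classifybasictoral}: a basic toral section is determined by any single non-zero restriction, so non-proportional basic toral sections have non-proportional restrictions to each $E_i$ on which both are non-zero, and these restrictions are distinct monomials on $\bP^1_\kappa$, hence independent. It then counts the classes directly: start the construction on each $E_i$ ($d_i+1$ choices up to scalar) and subtract one identification per node, since two starting data produce the same section exactly when they agree, non-zero, at a common node. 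Your peeling induction proves the same enumeration and independence recursively; it is more self-contained (the paper's independence claim is asserted rather than argued), at the cost of the node bookkeeping you flag yourself.

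One step of your independence argument needs patching. You say that restricting a vanishing linear combination to $E_r$ kills the coefficients of the sections meeting $E_r$ because ``their restrictions are the distinct monomials on $E_r$.'' But every section in your class $\alpha=0$ restricts to a scalar multiple of $Y^{d_r}$, so for this to be true you must know there is at most one such class up to scalar. This does hold: a basic toral section of $\cL|_{Z'}$ that is non-zero at $q$ must restrict on $E_{r-1}$ to the unique (up to scalar) toral section with no zero at $q$, and is then pinned down by Lemma~\ref{lem:classifybasictoral}. Alternatively, reverse the order: restrict to $Z'$ first, where the classes meeting $Z'$ restrict to pairwise non-proportional basic toral sections of $\cL|_{Z'}$ and die by induction, and only then restrict to $E_r$. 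Either fix is one line, so this is a gloss rather than a gap.
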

\begin{proof}
Note firstly that $H^0(Z,\cL)$ is naturally isomorphic to the kernel of the natural map $\oplus_{l=1}^r H^0(E_l, \cO_{E_i}(d_i)) \to H^0(T,\cO_T)$ where $T$ is the set of nodes in $Z$. This has dimension $d = \sum (d_i + 1) - (r-1)$. Note that the only linear relations between basic toral sections are those which are scalar multiples of each other so it suffices to find $d$ basic toral sections, no two of which are multiples of each other. The above construction provides these once we note that the basic toral section constructed from some toral section $s_i \in H^0(E_i,\cL|_{E_i})$ coincides with one constructed from $s_{i-1} \in H^0(E_{i-1},\cL_{E_{i-1}})$ if and only if $s_i, s_{i-1}$ take on the same non-zero value at $E_{i-1} \cap E_i$. 
\end{proof}

We can now construct toral generators for reflexive ideals in $R$. 

\begin{proposition}  \label{prop:toralgenforreflexive}
Let $D$ be an effective toral divisor on $Y$ such that $\cO_Y(-D)$ is generated by global sections and $I = f_*\cO_Y(-D)$ be the associated ideal of $R$. Then $I$ is generated by toral functions.  
\end{proposition}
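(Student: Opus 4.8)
The plan is to reduce, via Nakayama's lemma and Lemma~\ref{lem:reflexivegen}, to a statement about the restriction map to $Z$, and then to realise each member of the basic toral basis of $H^0(\cO_Z(-D))$ as the restriction of a toral function lying in $I$.

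First I would record that $d_i := -D.E_i = \deg_{E_i}\cO_Z(-D)\ge 0$ for $i=1,\dots,r$, since $\cO_Y(-D)$ is generated by global sections; this is exactly the hypothesis needed to apply Proposition~\ref{prop:toralbasisexists}, so that $H^0(\cO_Z(-D))$ admits a basis of basic toral sections. Because $R$ is local, Lemma~\ref{lem:reflexivegen} identifies $I/\frakm I \simeq H^0(\cO_Z(-D))$ via restriction to $Z$, and hence a family of elements of $I$ generates $I$ as soon as their restrictions to $Z$ span $H^0(\cO_Z(-D))$. It therefore suffices to show that every basic toral section $s$ is the restriction to $Z$ of some toral function $w\in I$.

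To build $w$ from $s$ I would work on the level of divisors. By Proposition~\ref{prop:toralfunctions} a toral function $w=x^iy^j$ has divisor $\sum_p ((i,j).\nu_p)E_p$ on $Y$, so, writing $D=\sum_p\mu_pE_p$, its zero divisor as a section of $\cO_Y(-D)$ is $F=\sum_p\rho_pE_p$ with $\rho_p=(i,j).\nu_p-\mu_p$, and $w\in I$ precisely when all $\rho_p\ge 0$. By Lemma~\ref{lem:classifybasictoral} the section $s$ is pinned down by a single nonzero restriction $s|_{E_l}$, whose two vanishing orders at the toral points of $E_l$ prescribe $\rho_l=0$ together with one neighbouring value, say $\rho_{l-1}=a$. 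Since consecutive pairs $\nu_{l-1},\nu_l$ form a $\bZ$-basis of $\bZ^2$ — the determinant $\det(\nu_p,\nu_{p+1})$ is constant equal to $-1$ by the recursion \eqref{eq:definenu} and the initial values $\nu_0=(0,1),\nu_1=(1,0)$ — these two conditions determine a unique lattice point $(i,j)\in\bZ^2$, hence a candidate toral function $w$. The recursion then forces $\rho_{l+1}=a'$ with $a+a'=d_l$, so $w|_{E_l}$ has exactly the prescribed zeros, and by the uniqueness in Lemma~\ref{lem:classifybasictoral} the restriction $w|_Z$ is a nonzero scalar multiple of $s$.

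The main obstacle is to verify that the lattice point $(i,j)$ produced this way really yields a function in $I$, i.e.\ that $\rho_p\ge 0$ for every $p$, equivalently that $(i,j)$ lies in the cone $\bR_{\ge 0}(1,0)+\bR_{\ge 0}(k,m)$ of Proposition~\ref{prop:toralfunctions}. This is a discrete convexity statement: the second differences of the sequence $p\mapsto\rho_p$ are controlled by $F.E_p=-D.E_p=d_p\ge 0$ for $1\le p\le r$ (using $\mathrm{div}(f^*w)\sim 0$ together with global generation), while the hypothesis that $s$ is \emph{basic} toral forces the vanishing locus of $\rho$ to be precisely the interval of indices on which $s$ is nonzero. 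Propagating nonnegativity outward from that interval by means of the recursion $\nu_{p+1}=m_p\nu_p-\nu_{p-1}$ and the inequalities $d_p\ge 0$ should give $\rho_p\ge 0$ throughout, and hence $w\in I$, completing the lift. I expect this positivity propagation to be the only genuinely delicate step; everything else is formal.
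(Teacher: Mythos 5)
Your proposal is correct and follows essentially the same route as the paper: the identical reduction via Lemma~\ref{lem:reflexivegen} and Proposition~\ref{prop:toralbasisexists} to lifting basic toral sections, and the identical lifting mechanism, since your recursion for the $\rho_p$ (coming from $\nu_{p+1}=m_p\nu_p-\nu_{p-1}$) is exactly the paper's intersection equations $(D+\Delta).E_j=0$ for the coefficients $\delta_j$ of the zero divisor of the lift. The positivity propagation you flag as the delicate step is precisely the induction the paper performs and it does go through: $\rho_{p+1}-\rho_p = d_p + (m_p-1)\rho_p - \rho_{p-1} \ge 0$ once one notes $m_p\ge 2$ (minimality of the resolution), which makes $\rho$ non-decreasing away from $E_l$, hence non-negative, with zero set an interval, so that $w|_Z$ is basic toral and Lemma~\ref{lem:classifybasictoral} completes the identification with $s$.
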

\begin{proof}
Combining Lemma~\ref{lem:reflexivegen} with Proposition~\ref{prop:toralbasisexists}, it suffices to lift every basic toral section of $\cO_Z(-D)$ to a toral section of $\cO_Y(-D)$. Let $d_i := -D.E_i$ and consider a basic toral section whose restriction to $E_i$ has a zero of order $e$ at $E_{i-1}$ and order $d_i - e$ at $E_{i+1}$. Lifting this to a toral function amounts to finding an effective toral divisor $\Delta = \sum_{j = 0}^{r+1} \delta_j E_j$ such that a) $-D \sim \Delta $ and b) $
\delta_{i-1} = e, \ \delta_i =0, \ \delta_{i+1} = d_i -e$. Condition a) amounts to checking that all the intersection numbers $(D + \Delta).E_j = 0$. We solve these equations for $\delta_j$ by induction on $|j-i|$ and simultaneously prove effectivity of $\Delta$ by proving $\delta_j$ is non-decreasing for $j\geq i$ and non-increasing for $j \leq i$. The base case is satisfied since
$$ (D+\Delta).E_i = \delta_{i-1} + D.E_i + \delta_{i+1} = e - d_i + (d_i -e) = 0.$$
Suppose now that non-decreasing integers  $\delta_i,\ldots,\delta_j$ have now been defined satisfying $(D+\Delta).E_l = 0$ for $l=i,\ldots, j$. We examine the equation
$$
0 = (D+\Delta).E_j = -d_j + \delta_{j-1} - m_j \delta_j + \delta_{j+1}.
$$
We may thus solve for the integer $\delta_{j+1}$ which further satisfies
$$
\delta_{j+1} - \delta_j = d_j + [(m_j-1)\delta_j - \delta_{j-1}].
\geq 0$$
by the inductive hypothesis. A similar argument solves for non-increasing $\delta_j$ when $j \leq i$. 
\end{proof}

\begin{proof} of Theorem~\ref{thm:HJiscyclic} Inspired by Corollary~\ref{cor:mtorsionWeildiv} or rather, its proof, we define a ring structure on 
$$
S = \bigoplus_{l=0}^{m-1} \cO(-lf_*E_0)t^l
$$
by defining $t^{-m} = x^ky^m$. Given a toral function $x^iy^j \in \cO(-lf_*E_0)$, we say 
$$
x^iy^jt^l = x^{i-kl/m}y^{j-l}
$$
is a toral function in $S$. It suffices to find two toral functions in $f_1,f_2 \in S$ which generate the maximal ideal 
$$ \frakn := \frakm \oplus \cO(-f_*E_0)t \oplus \ldots \oplus \cO(-(m-1)f_*E_0)t^{m-1}.$$
By Proposition~\ref{prop:toralgenforreflexive}, it suffices to show $f_1,f_2$ will generate all the toral elements in the summands $\frakm, \ldots ,\cO(-(m-1)f_*E_0)t^{m-1}$. Note also that our sloppiness in notation for $x^iy^j$ is warranted since we only care about the ideal generated by $f_1,f_2$. 

From Proposition~\ref{prop:toralfunctions}, we know that $x^iy^j$ is a toral function in $\cO(-lf_*E_0)$ if and only if $(i,j) \in \bR_{\geq 0} (1,0) + \bR_{\geq 0} (k,m)$ and furthermore $l\leq (i,j).\nu_0 = j$. We may thus let
\begin{equation} \label{eq:f1generator}
 f_1 = x^ky^mt^{m-1} = x^{k/m}y.    
\end{equation}
To find $f_2$ we first find $l \in \{1,\ldots,m-1\}$ which solves $kl \equiv -1 \mod m$, which is possible since $k$ and $m$ are relatively prime. Let $i = \frac{kl+1}{m}$ and 
\begin{equation}  \label{eq:f2generator}
f_2 = x^iy^lt^l = x^{1/m}.
\end{equation}
An elementary calculation shows that the toral elements of $S$ all have the form $x^iy^j$ where $i \in \frac{1}{m}\bZ, j \in \bZ$ and furthermore $0 \leq j \leq \frac{m}{k}i$. It follows that all the toral elements in $\frakn$ are generated by $f_1$ and $f_2$. 

Finally, the construction of $S$ here is the cyclic covering trick, see for example~\cite[4.1.B]{L04}, which away from the singularity uses an $m$-torsion line bundle so $S/R$ is \'etale. 
\end{proof}

For use in the next section, we record the following fact which follows from Proposition~\ref{prop:toralfunctions}.
\begin{lemma}  \label{lem:mthPowerf1}
The toral function $f_1 \in S$ defined in Equation~\ref{eq:f1generator} is such that $f_1^m\in R$ and its divisor is $mf_*E_0$. 
\end{lemma}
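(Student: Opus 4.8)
The plan is to compute $f_1^m$ directly from the multiplicative notation set up for toral functions in $S$, and then read off its divisor using Proposition~\ref{prop:toralfunctions}. Recall from Equation~\ref{eq:f1generator} that $f_1 = x^{k/m}y$ in the $x^iy^j$ shorthand, where such a symbol is only well defined modulo $R^\times$ and exponents add under multiplication, i.e. $x^iy^j x^{i'}y^{j'} = x^{i+i'}y^{j+j'}$ up to a unit. Raising $f_1$ to the $m$-th power therefore gives $f_1^m = x^k y^m$ modulo $R^\times$, which is exactly the toral function that appeared in Corollary~\ref{cor:mtorsionWeildiv}.

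First I would confirm that $x^k y^m$ genuinely lands in $R$ rather than in a higher graded summand of $S$. Since $f_1$ lies in the $t^{m-1}$-piece, its $m$-th power lies in the $\bZ/m$-graded piece of degree $m(m-1)\equiv 0 \pmod m$, namely the degree-zero summand $\cO(-0\cdot f_*E_0)=\cO_X$, whose global sections are $R$; hence $f_1^m\in R$. This is consistent with Proposition~\ref{prop:toralfunctions}, since $(k,m)=0\cdot(1,0)+1\cdot(k,m)$ lies in the cone $\bR_{\geq 0}(1,0)+\bR_{\geq 0}(k,m)$ at level $l=0$, so $x^ky^m$ is indeed a legitimate toral function of $R$.

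Next I would compute the divisor of zeros. By Proposition~\ref{prop:toralfunctions} the divisor of $x^k y^m$ on $Y$ is $\sum_l \lambda_l E_l$ with $\lambda_l=(k,m)\cdot\nu_l$, and as recorded in the proof of Corollary~\ref{cor:mtorsionWeildiv} the two boundary coefficients are $\lambda_0=(k,m)\cdot(0,1)=m$ and $\lambda_{r+1}=(k,m)\cdot(m,-k)=0$. To obtain the Weil divisor of $f_1^m$ on $X=\Spec R$, I would push forward along $f$: the curves $E_1,\ldots,E_r$ are exceptional, so $f_*E_i=0$ for $1\le i\le r$, while $E_0$ and $E_{r+1}$ survive. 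Thus $\text{div}_X(f_1^m)=f_*\!\big(\sum_l \lambda_l E_l\big)=\lambda_0 f_*E_0+\lambda_{r+1}f_*E_{r+1}=m f_*E_0$, as claimed. The computation is routine once the notation is unwound, so there is no deep obstacle; the one place demanding care is the transition from the divisor on $Y$ to the Weil divisor on $X$, where one must use $\text{div}_X(w)=f_*\,\text{div}_Y(f^*w)$ so that the exceptional contributions are discarded and only the $E_0$-coefficient remains. Keeping in mind that the symbols $x^iy^j$ are defined only up to $R^\times$ is likewise harmless here, since divisors of zeros are insensitive to unit multiples.
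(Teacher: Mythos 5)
Your proposal is correct and follows essentially the same route the paper intends: the lemma is stated in the paper as a direct consequence of Proposition~\ref{prop:toralfunctions}, and your computation $f_1^m = x^k y^m$ together with the pairings $\lambda_0=(k,m)\cdot\nu_0=m$, $\lambda_{r+1}=(k,m)\cdot\nu_{r+1}=0$ is exactly the calculation already recorded in the proof of Corollary~\ref{cor:mtorsionWeildiv}. Your added care about the $\bZ/m$-grading, the pushforward $f_*$ killing the exceptional $E_i$, and the harmlessness of the $R^\times$-ambiguity fills in the details the paper leaves implicit, with no gaps.
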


\section{toral terminal orders, singular centre case}

Unlike in the geometric case where the residue fields are algebraically closed, there are now terminal orders with singular centre \cite{CI24}. Their ramification data were classified in the case where the ``index'' \cite[\S 3,p.~6]{CI24} was a prime $m>5$. Here we classify the corresponding orders, giving explicit constructions of them. 

Throughout this section, we let $(R,\frakm)$ be an excellent two-dimensional noetherian Hensel local domain with residue field $\kappa$. The classification of terminal ramification data on $R$ is best encapsulated via the following definition.

\begin{definition}  \label{def:terminalHJ}
A localised Brauer class $(\beta,g_{\frakp})$ on $R$ is {\em toral terminal} if either a) $R$ is regular and we are in the case of Definition~\ref{def:stableno2ndram} or \ref{def:stable2ndram}, or if b) the following hold:
\begin{enumerate}
    \item $R$ is a $\kappa$-rational Hirzebruch-Jung singularity whose residue field has trivial Brauer group. Let $E_1, \ldots, E_r$ be the string of exceptional curves in the minimal resolution (indexed naturally so $E_i$ intersects $E_{i+1}$). 
    \item $\beta$ is unramified along codimension one primes in $R$.
    \item The order $m$ of $\beta$ equals the {\em determinant} of $R$ which is defined to be $\det R := \det(M_R)$ where
    $$ M_R := -
    \begin{pmatrix}
    E_1^2 & 1 & 0 & \ldots & 0 \\
    1 & E_2^2 & \ddots & \ddots & \vdots \\
    0 & \ddots & \ddots & \ddots & 0 \\
    \vdots & & & & 1 \\
    0 & \ldots & 0 & 1 & E_r^2 
    \end{pmatrix}.
    $$
    \item At most one $g_{\frakp}\neq 1$ in which case, $\frakp$ corresponds to an irreducible curve $C$ on the minimal resolution which (scheme-theoretically) intersects the exceptional curve in a single $\kappa$-rational point $y \notin E_2 \cup \ldots \cup E_{r-1}$.
\end{enumerate}

\begin{remark}  \label{rem:relationToTerminal}
\begin{enumerate}
    \item Given a toral terminal localised Brauer class $(\beta,g_{\frakp})$ on $R$ as above, then \cite[Theorem~7.1]{CI24} shows that it is terminal whenever $m$ is a prime $>2$. Furthermore, if $m$ is a prime $>5$ and $\kappa$ is finite, then these are the only terminal localised Brauer classes on singular $R$.   
    \item In \cite{CI24}, the residue fields are all assumed to be finite and so have trivial Brauer group.
\end{enumerate}
\end{remark}
\end{definition}

Let $(R,\frakm)$ be a $\kappa$-rational Hirzebruch-Jung singularity of determinant $m$ (see Definition~\ref{def:terminalHJ}(3)). Suppose that $\kappa$ contains a primitive $m$-th root of unity. From Theorem~\ref{thm:HJiscyclic}, there exists a regular local ring $(S,\frakn)$ such that $S/\frakn = \kappa$ and $S/R$ is a cyclic extension which is \'etale away from the singular point. In Theorem~\ref{thm:HJiscyclic}, it is presented as a $\bZ/m$-graded algebra $S = \oplus_{i \in \bZ/m} S_i$ so $(\bZ/m)^{\vee} = \mu_m$ acts on it naturally. Let $\alpha \in H^1(\kappa,\bZ/p)$ correspond to a cyclic degree $m$ field extension $\tilde{\kappa}/\kappa$ with some chosen action of $\bZ/m$. Since $R$ is Hensel local, there is a corresponding cyclic \'etale extension $\tilde{R}/R$ and a corresponding $\mu_m$-graded decomposition $\tilde{R} = \oplus_{\omega \in \mu_m} \tilde{R}_{\omega}$. 

\begin{definition}  \label{def:symbolHJ}
We define the {\em symbol} $(S,\alpha)$ to be the $R$-algebra whose underlying $R$-module structure is given by 
$$ \Delta := S \otimes_R \tilde{R}$$
and multiplication given by the skew-commutation relations
$$ rs = \omega^i sr\quad \textup{for all} \ \ s \in S_i, \ r \in \tilde{R}_{\omega}.$$
\end{definition}

\begin{proposition}  \label{prop:isOrderInDivision}
The symbol $\Delta = (S,\alpha)$ defined above is a maximal order in a division ring. Furthermore, $\Delta$ is Azumaya in codimension one. 
\end{proposition}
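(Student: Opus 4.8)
The plan is to deduce that $\Delta$ is a maximal order in a division ring from three ingredients: that $K\Delta := K\otimes_R \Delta$ is a division ring, that $\Delta$ is a reflexive $R$-module, and that $\Delta$ is maximal at every height one prime of $R$. The last of these is essentially the second assertion of the proposition, since an Azumaya algebra over a discrete valuation ring is automatically maximal; and once $\Delta$ is known to be reflexive and maximal in codimension one, the standard criterion (a reflexive order over a two-dimensional normal domain that is maximal in codimension one is maximal) yields maximality of $\Delta$. So the work splits into a generic fibre computation, the codimension one (Azumaya) statement, and reflexivity.

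First I would pin down the generic fibre. Since $S$ and $\tilde R$ are domains finite over $R$, the algebras $KS$ and $K\tilde R$ are field extensions of $K$ of degree $m$; because $\zeta_m \in R$, and the $\bZ/m$-grading on $S$ and the $\mu_m$-grading on $\tilde R$ correspond under Cartier duality to cyclic Galois actions, both $KS$ and $K\tilde R$ are cyclic Galois over $K$. The skew-commutation relations then identify $K\Delta$ with the classical cyclic (symbol) algebra attached to these data, so $K\Delta$ is a central simple $K$-algebra of degree $m$. For the codimension one statement, fix a height one prime $\frakp$. By Theorem~\ref{thm:HJiscyclic} the cover $S/R$ is \'etale away from the singular closed point, and $\tilde R/R$ is \'etale everywhere, so both $S_{\frakp}/R_{\frakp}$ and $\tilde R_{\frakp}/R_{\frakp}$ are finite \'etale. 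Base changing $\Delta_{\frakp}$ along the faithfully flat \'etale $\bZ/m$-cover $S_{\frakp}/R_{\frakp}$ splits the symbol, giving $\Delta_{\frakp}\otimes_{R_{\frakp}} S_{\frakp}\cong M_m(S_{\frakp})$; since $\Delta_{\frakp}$ is finite free of rank $m^2$ and becomes a matrix algebra after a faithfully flat base change, \'etale descent shows $\Delta_{\frakp}$ is Azumaya, which proves the second assertion. Reflexivity is then immediate: $\tilde R$ is free over the Hensel local ring $R$, so $\Delta\cong S^{\oplus m}$ as an $R$-module, and $S=\bigoplus_{l} \cO_X(-l f_* E_0)$ is a finite direct sum of reflexive rank one modules, hence reflexive.

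It remains to show $K\Delta$ is a division ring, and this is where I expect the main difficulty. As $m$ is prime, the central simple algebra $K\Delta$ is either split or a division ring, so it suffices to show its Brauer class is non-trivial. Identifying $K\Delta$ with the symbol $(a,b)_{\zeta}$, I would take $a$ to be the toral function $f_1^m$ of Lemma~\ref{lem:mthPowerf1}, whose divisor is $m f_* E_0$, and $b\in R^{\times}$ the unit cutting out the \'etale extension $\tilde R$. The subtlety is that this class is unramified along every height one prime (consistent with Definition~\ref{def:terminalHJ}(2)), so its non-triviality cannot be seen by a tame symbol computation in codimension one and must instead be detected at the singular point. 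Concretely, the class is controlled by the fact that $f_* E_0$ has order exactly $m$ in the divisor class group of $R$ --- which is the content of $m=\det R$ together with Corollary~\ref{cor:mtorsionWeildiv} --- and by $\tilde R/R$ being a non-trivial \'etale cyclic cover; pairing these on the punctured spectrum $\Spec R\setminus\{\frakm\}$, equivalently identifying the resulting class with the given $\beta$ of order $m$, shows the symbol is non-split. Thus $K\Delta$ is a division ring, completing the proof.
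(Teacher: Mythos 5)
Your preparatory steps are fine and essentially match the paper: reflexivity (you get it from $\Delta\cong S^{\oplus m}$ as an $R$-module, the paper from Cohen--Macaulayness of the \'etale $S$-algebra $S\otimes_R\tilde R$), the Azumaya property in codimension one via \'etale descent, and maximality from ``reflexive $+$ maximal in codimension one''. The genuine gap is in the step you yourself flag as the main difficulty: non-splitting of $K\Delta$. Your argument there is that the class ``is controlled by'' the $m$-torsion divisor class $f_*E_0$ and the non-trivial cover $\tilde R/R$, and that ``pairing these on the punctured spectrum, equivalently identifying the resulting class with the given $\beta$ of order $m$, shows the symbol is non-split''. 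No pairing is constructed and no non-degeneracy is proved; worse, the appeal to ``the given $\beta$ of order $m$'' is circular. In Proposition~\ref{prop:isOrderInDivision} there is no Brauer class $\beta$ given: $\alpha$ is an arbitrary class in $H^1(\kappa,\bZ/m)$ cutting out a degree $m$ field extension, and the assertion that the class of $(S,\alpha)$ in $\textup{Br}\,K(R)$ has order $m$ (rather than being split) is exactly what must be proved. The matching of $(S,\alpha)$ with a toral terminal $\beta$ happens only afterwards, in Proposition~\ref{prop:DeltaSameRam}, and the main theorem uses the present proposition as an input to that matching, so you cannot invoke $\beta$ here. Being unramified at every height one prime of $R$ is perfectly consistent with being split, so some concrete input is unavoidable.

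The paper supplies that input with a norm argument that makes your slogan ``detected at the singular point'' precise, and it is worth seeing how it runs. Write $\tilde R=R[\sqrt[m]{\alpha}]$ with $\alpha\in R^\times$ and suppose the period of $K\Delta$ is $n\mid m$ with $n<m$. Then the cyclic algebra $K(S)[z;\sigma]/(z^m-\alpha^n)$ is split, so by the norm criterion \cite[Corollary~4.7.5]{GilSz} one has $\alpha^n=N_{K(S)/K(R)}(\beta)$ with $\beta=\beta_1\beta_2^{-1}$, $\beta_1,\beta_2\in S$. Since $S$ is a UFD and $N(\beta_1)$, $N(\beta_2)$ differ by a unit, one can cancel prime factors by modifying $\beta$ by $1$-coboundaries $\gamma^{-1}\sigma(\gamma)$ and so assume $\beta\in S^\times$. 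Now reduce modulo $\frakn$: because $S/\frakn=R/\frakm=\kappa$ (the cover $S/R$ is totally ramified at the closed point, so $\sigma$ acts trivially on the residue field), $N(\beta)\equiv\bar\beta^{\,m}$ modulo $\frakm$, hence $\alpha^n$ is an $m$-th power in $\kappa$ --- contradicting that $\kappa(\sqrt[m]{\alpha})/\kappa$ has degree $m$. Note that it is precisely the equality of residue fields $S/\frakn=R/\frakm$, a feature of the singular closed point, that detects non-triviality; note also that this argument shows the period equals $m$ and so needs no primality hypothesis, whereas your reduction (``split or division'') does. To repair your proof you would need either this norm computation or a worked-out ramification computation on the resolution showing that the class of $(S,\alpha)$ ramifies non-trivially along an exceptional curve.
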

\begin{proof}
Note that the commutative algebra $S \otimes_R \tilde{R}$ is an \'etale extension of $S$ and hence regular and thus Cohen-Macaulay. It follows that $\Delta$ is a reflexive $R$-module. In codimension one, both $S/R$ and $\tilde{R}/R$ are \'etale so $\Delta$ is defined using the usual symbol construction of Azumaya algebras. We see thus that $\Delta$ is a maximal order, Azumaya in codimension one. 

It only remains to show that $\Delta_K := \Delta\otimes_R K(R)$ is a division ring, which we do by showing that it cannot have  period $<m$. Suppose that $\tilde{R}$ is obtained by adjoining an $m$-th root of $\alpha \in R^{\times}$ to $R$. Suppose that the order of $\Delta_K$ is $n|m$. If $\sigma$ denotes the action of some fixed primitive $p$-th root of unity on $S$, then the cyclic algebra $A:= K(S)[z;\sigma]/(z^m - \alpha^n)$ is a full matrix algebra over $K(R)$. We see thus from \cite[Corollary~4.7.5]{GilSz} that $\alpha^n \in K(R)$ is a norm from $K(S)$, say $\alpha^n = N(\beta)$ where $\beta = \beta_1\beta_2^{-1}$ for $\beta_1,\beta_2 \in S$. Now $S$ is a UFD so we may prime factorise both $\beta_1$ and $\beta_2$. 

We first show that by modifying $\beta$ by a 1-coboundary $\gamma^{-1}\sigma(\gamma), \gamma \in K(S)$, we may assume that $\beta \in S^{\times}$. Indeed, note that $N(\beta_1),N(\beta_2)$ differ by the unit $\alpha^n\in R^{\times}$. Thus if there is any prime factor $p_2|\beta_2$, there is some prime factor $p_1 | \beta_1$ and $i$ such that $p_2 | \sigma^i(p_1)$. We may thus multiply by some 1-coboundary so that these factors now cancel. Having reduced the number of prime factors of $\beta_2$, we are done by induction.

We now use the fact that $S/\frakn = R/\frakm$ to see that modulo $\frakm$, $\alpha^n$ is a $m$-th power. Since $\tilde{\kappa}$ is a degree $m$ extension of $\kappa$ obtained by adjoining a $m$-th root of $\alpha$, we must have $n=m$. 
\end{proof}

\begin{proposition} \label{prop:DeltaSameRam}
Let $(\beta,g_{\frakp})$ be a toral terminal localised Brauer class on a $\kappa$-rational Hirzebruch-Jung singularity $R$. Suppose that $R$ has a primitive $m$-th root of unity where $m$ is the order of $\beta$. Then there is an $\alpha \in H^1(\kappa, \bZ/m)$ such that class of the symbol $(S,\alpha)$ in $\textup{Br}\,K(R)$ is $\beta$. 
\end{proposition}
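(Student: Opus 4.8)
The plan is to recognise the symbol $(S,\alpha)$ as a cyclic algebra and then to exhibit $\beta$ itself as such a cyclic algebra with \emph{unit} Kummer parameter. Generically, Definition~\ref{def:symbolHJ} presents $\Delta\otimes_R K(R)$ as the cyclic crossed product $(K(S)/K(R),\sigma,\tilde\alpha)$, where $K(S)/K(R)$ is the degree $m$ cyclic extension coming from $S$, the generator $\sigma$ of its Galois group is the one determined by the $\bZ/m$-grading on $S$ (via the pairing $\bZ/m\times\mu_m\to\mu_m$), and $\tilde\alpha\in R^{\times}$ is the Kummer unit cutting out $\tilde R$ from $\alpha$. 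Its Brauer class is the cup product $[S]\cup[\tilde\alpha]$, where $[S]\in H^1(K(R),\bZ/m)$ is the class of $K(S)/K(R)$. Since $m$ is invertible in $R$ and $R$ is Hensel local, $R^{\times}/(R^{\times})^m\cong\kappa^{\times}/(\kappa^{\times})^m\cong H^1(\kappa,\bZ/m)$, so as $\alpha$ ranges over $H^1(\kappa,\bZ/m)$ the unit $\tilde\alpha$ ranges over all of $R^{\times}$ modulo $m$-th powers. It therefore suffices to produce some $\tilde\alpha\in R^{\times}$ with $\beta=[S]\cup[\tilde\alpha]$.

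First I would show that $\beta$ is split by $K(S)$. Because $S/R$ is \'etale away from the closed point (Theorem~\ref{thm:HJiscyclic}), the morphism $\Spec S\setminus\{\frakn\}\to\Spec R\setminus\{\frakm\}$ is \'etale, so the restriction $\beta_{K(S)}$ is unramified at every height one prime of $S$ (pullback along an \'etale map preserves unramifiedness, and $\beta$ is unramified in codimension one by Definition~\ref{def:terminalHJ}(2)). As $S$ is regular local, purity (Auslander--Goldman) identifies the codimension-one-unramified classes with $\textup{Br}\,S$, and since $S$ is Hensel local with $\textup{Br}\,\kappa=0$ we get $\textup{Br}\,S\cong\textup{Br}\,\kappa=0$; hence $\beta_{K(S)}=0$. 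Because $K(S)/K(R)$ is cyclic of degree $m$, the theory of cyclic algebras then gives $\beta=[S]\cup[b]$ for some $b\in K(R)^{\times}$.

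It remains to replace $b$ by a unit without changing the Brauer class, and this is the step I expect to be the main obstacle. For a height one prime $\frakp$ of $R$ the class $[S]$ is unramified (as $S/R$ is \'etale there), so the residue formula for a cup product gives $a_{\frakp}(\beta)=v_{\frakp}(b)\cdot[S|_{\kappa(\frakp)}]$; vanishing of $a_{\frakp}(\beta)$ forces $d_{\frakp}\mid v_{\frakp}(b)$, where $d_{\frakp}$ is the order of $[S|_{\kappa(\frakp)}]$, equivalently the common residue degree of the primes of $S$ over $\frakp$. Choosing one prime $\frakP\mid\frakp$ for each $\frakp$, the divisor $\mathcal{D}:=\sum_{\frakp}(v_{\frakp}(b)/d_{\frakp})\,\frakP$ on $\Spec S$ satisfies $\mathrm{N}_{S/R}(\mathcal{D})=\textup{div}_R(b)$, since $\mathrm{N}_{S/R}(\frakP)=d_{\frakp}\,\frakp$. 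Now $S$ is regular local, so $\textup{Cl}\,S=0$ and $\mathcal{D}=\textup{div}_S(c)$ is principal for some $c\in K(S)^{\times}$; hence $b/\mathrm{N}_{K(S)/K(R)}(c)$ has trivial divisor and so is a unit $\tilde\alpha\in R^{\times}$. Finally $[S]\cup[\mathrm{N}_{K(S)/K(R)}(c)]=0$, because a cyclic algebra with norm parameter is split, so $\beta=[S]\cup[\tilde\alpha]$ and the corresponding $\alpha$ does the job. The delicate points are the residue-of-a-cup-product computation and the bookkeeping of residue degrees against norms of primes; the conceptual engine, that $\textup{Cl}\,S=0$ lets one absorb $\textup{div}_R(b)$ into a norm, is precisely what makes the unit parameter $\tilde\alpha$ exist.
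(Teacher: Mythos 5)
Your proof is correct, and it takes a genuinely different route from the paper's. The paper never leaves the resolution: it works on the minimal resolution $X \to \Spec R$, invokes the Artin--Mumford--Saltman sequence to see that the ramification covers of the exceptional curves $E_i \simeq \bP^1_{\kappa}$ are \'etale and, since $\textup{Br}\,\kappa = 0$, are pulled back from classes in $H^1(\kappa,\bZ/m)$; it then cites the structure theory of \cite{CI24} (a version of Proposition~9.8 there) giving a homomorphism $z \colon \bZ^2 \to H^1(\kappa,\bZ/m)$ with the ramification of $\beta$ along $E_i$ equal to $z(\nu_i)$. Since $z(\nu_0)=0$, the class $\beta$ is pinned down by its ramification along $E_1$, and $\alpha$ is chosen to match the symbol's ramification there. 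Your argument instead works on the cover: you split $\beta$ by $K(S)$ via purity on the regular local ring $S$ together with $\textup{Br}\,S \simeq \textup{Br}\,\kappa = 0$ (here Henselian-ness of $S$, being finite local over the Henselian $R$, makes the vanishing immediate), present $\beta$ as a cyclic algebra $(K(S)/K(R),\sigma,b)$, and then use factoriality of $S$ to absorb $\textup{div}_R(b)$ into a norm, leaving a unit parameter that Hensel's lemma plus Kummer theory converts into the desired $\alpha \in H^1(\kappa,\bZ/m)$. The bookkeeping steps you flag as delicate are all standard and go through: the tame residue formula for a cup product with an unramified character, the compatibility $\textup{div}_R(N(c)) = N_{S/R}(\textup{div}_S(c))$ for the finite map of normal schemes, and the constancy of residue degrees over a fixed $\frakp$ (which follows since $K(S)/K(R)$ is Galois); both proofs implicitly assume tameness ($m$ invertible in $\kappa$), which is built into the paper's setting. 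What the paper's route buys is the explicit dictionary between $\alpha$ and the combinatorial ramification data $z(\nu_i)$ along the exceptional curves, which is the language used throughout \cite{CI24}; note the paper's last step (classes with equal ramification on $X$ coincide) silently uses $\textup{Br}\,X = 0$, a purity/vanishing statement of exactly the kind you make explicit, but in the easier affine local situation. What your route buys is self-containedness -- no resolution, no appeal to \cite{CI24} -- and in fact a slightly stronger statement: every class in $\textup{Br}\,K(R)$ that is unramified in codimension one on $R$ and killed by $m$ is the class of a symbol $(S,\alpha)$.
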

\begin{proof}
We use the notation in Definition~\ref{def:terminalHJ}. Let $X \to \Spec R$ be the minimal resolution of the $\kappa$-rational Hirzebruch-Jung singularity $R$. Note that $\beta$ is ramified only along the exceptional curve, so by the Artin-Mumford-Saltman sequence \cite{AM}, \cite[Theorem~6.12]{Salt08}, the ramification covers of the $E_i$ are all \'etale. Now $\textup{Br}\, \kappa = 0$ so the ramification along $E_i$ is given by cyclic cover of the form $\bP^1_{\kappa_i} \to \bP^1_{\kappa} \simeq E_i$ where $\kappa_i$ is a cyclic extension of $\kappa$ of degree $n|m$. The ramification is thus given by an element of $H^1(\kappa, \bZ/p)$.

We now use the theory developed in \cite[Setion~4]{CI24}. There was an assumption there that the residue field was finite, but the theory goes through in this case, as long as one realises that the absolute Galois group $G$ of $\kappa$ is now not necessarily $\hat{\bZ}$. In particular, we have the following version of \cite[Proposition~9.8]{CI24}.
\begin{lemma}
There exists an homomorphism $z\colon \bZ^2 \to H^1(\kappa,\bZ/m)$ such that the ramification of $\beta$ along $E_i$ is given by $z(\nu_i)$ where $\nu_i$ is as defined in Equation~(\ref{eq:definenu}). 
\end{lemma}
In particular, we see that the ramification of $\beta$, and hence $\beta$ itself is completely determined by $z(0,1) = z(\nu_0) = 0$ and $z(1,0) = z(\nu_1)$, the ramification along $E_1$. It is now clear that we can pick $\alpha \in H^1({\kappa},\bZ/m)$ so that the symbol $(S,\alpha)$ has the same ramification as $\beta$ along $E_1$, and hence belongs to the same Brauer class over $K(R)$. 
\end{proof}

\begin{theorem}  \label{thm:mainHJ}
Let $\Lambda$ be a normal order over an excellent two-dimensional Hensel local noetherian domain $(R,\frakm)$ which is not regular. Suppose its localised Brauer class  $(\beta,g_{\frakp})$ is toral terminal. If $R$ has a primitive $m$-th root of unity where $m$ is the order of $\beta$, then $\Lambda \simeq M_n(\Delta_d(z))$ where 
\begin{enumerate}
    \item $\Delta$ is the symbol $(S,\alpha)$ where $S$ is the regular cyclic cover of $R$ constructed in Theorem~\ref{thm:HJiscyclic} and $\alpha \in H^1(\kappa, \bZ/m)$. 
    \item $n \in \bN$, $d$ is either 1 or the unique $g_{\frakp}$ not equal to 1, and $z \in S \subset \Delta$ is the normal element denoted $f_1$ in Equation~(\ref{eq:f1generator}). 
\end{enumerate}
In particular, $\Lambda$ has global dimension two. 
\end{theorem}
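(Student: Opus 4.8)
The plan is to reduce everything to the uniqueness result of Theorem~\ref{thm:Deltadzunique} by exhibiting a concrete maximal order $\Delta$ together with a normal element $z = f_1$ satisfying Assumption~\ref{ass:nicenormalorder}, and then checking that the associated triangular algebra $\Delta_d(f_1)$ carries exactly the localised Brauer class $(\beta, g_{\frakp})$ of $\Lambda$.

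First I would produce the order. By Proposition~\ref{prop:DeltaSameRam} there is $\alpha \in H^1(\kappa, \bZ/m)$ with the symbol $(S,\alpha)$ representing $\beta$, and I set $\Delta = (S,\alpha)$; Proposition~\ref{prop:isOrderInDivision} says this is a maximal order in a division ring, Azumaya in codimension one. Next I would pin down the distinguished codimension-one prime. If some $g_{\frakp} \neq 1$, then by Definition~\ref{def:terminalHJ}(4) the associated curve $C$ meets the exceptional locus in a single $\kappa$-rational point off $E_2 \cup \cdots \cup E_{r-1}$; after possibly reversing the string (which fixes $\det R = m$) I may take this point on $E_1$, so that $C$ obeys exactly the constraints imposed on the auxiliary curve $E_0$ in the construction of $S$ in Theorem~\ref{thm:HJiscyclic}. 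Choosing $E_0 = C$, I put $\frakq = f_*E_0 = C$ and $d = g_C$; if instead all $g_{\frakp} = 1$, I take $E_0$ to be any admissible curve, $\frakq = f_*E_0$, and $d = 1$.

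The heart of the argument is verifying Assumption~\ref{ass:nicenormalorder} for $z = f_1$ of Equation~(\ref{eq:f1generator}). Since $f_1$ is homogeneous in the $\bZ/m$-grading of $S$, the skew-commutation relations of Definition~\ref{def:symbolHJ} make it a normal element of $\Delta$. As $f_1$ is part of a regular system of parameters of the regular local ring $S$, it is prime, so $f_1 S = \mathfrak{Q}$ is a height-one prime and $\Delta/f_1\Delta \simeq (S/\mathfrak{Q}) \otimes_R \tilde R$ is supported on $V(\frakq)$, giving condition (1). For (2) I would use Lemma~\ref{lem:mthPowerf1}: $f_1^m \in R$ has divisor $m f_*E_0 = m\frakq$, while $\operatorname{div}_S(f_1^m) = m\mathfrak{Q}$; comparing with the pullback $m\sum_j \mathfrak{Q}_j$ of $m\frakq$ along the codimension-one-\'etale cover $S/R$ forces $\mathfrak{Q}$ to be the unique prime over $\frakq$, so $\frakq$ is inert and $S_{\frakq}$ is a DVR with uniformiser $f_1$. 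Because $\Delta_{\frakq}$ is Azumaya with radical $\frakq R_{\frakq}\Delta_{\frakq}$, and $f_1$ differs from a uniformiser of $R_{\frakq}$ by a unit of $S_{\frakq} \subseteq \Delta_{\frakq}^{\times}$, we obtain $f_1\Delta_{\frakq} = \operatorname{rad}\Delta_{\frakq}$, which is (2). For (3) I would mimic the proof of Proposition~\ref{prop:structuretoralterminalregno2nd}: the element $f_2$ of Equation~(\ref{eq:f2generator}) is again homogeneous, hence normal, and reduces to a uniformiser of the DVR $S/\mathfrak{Q}$, so it is a non-zero-divisor in $\Delta/f_1\Delta$; since $f_1 S + f_2 S = \frakn$ and $S/\frakn = \kappa$ lies in degree zero, the twisting collapses and $\Delta/(f_1,f_2)\Delta \simeq \kappa \otimes_R \tilde R \simeq \tilde\kappa$ is a separable field extension of $\kappa$, forcing $\Delta/f_1\Delta$ to be hereditary.

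With Assumption~\ref{ass:nicenormalorder} established, Proposition~\ref{prop:Deltadisnormal} shows $\Delta_d(f_1)$ is normal of global dimension two with localised Brauer class $(\beta, g_{\frakp})$: its Brauer class is that of $\Delta$, namely $\beta$, and $g_{\frakq} = d$ with all other $g_{\frakp} = 1$ by our choice of $d$ and $\frakq$. This coincides with the localised Brauer class of $\Lambda$, so Theorem~\ref{thm:Deltadzunique} gives $\Lambda \simeq M_n(\Delta_d(f_1))$ with $n = \deg\Lambda/\deg\Delta_d$, and in particular $\Lambda$ has global dimension two. I expect the main obstacle to be the codimension-one analysis underlying (2) and (3): proving that the distinguished curve is inert in $S$, so that $f_1$ cuts out a single prime and generates the radical at $\frakq$, and confirming that the residual algebra $\Delta/(f_1,f_2)\Delta$ genuinely collapses to the separable field $\tilde\kappa$ rather than a larger twisted algebra.
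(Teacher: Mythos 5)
Your proposal is correct and follows essentially the same route as the paper: take $\Delta = (S,\alpha)$ via Proposition~\ref{prop:DeltaSameRam}, choose $E_0$ so that $f_*E_0$ is the distinguished prime $\frakq$, verify Assumption~\ref{ass:nicenormalorder} for $z=f_1$ (using Lemma~\ref{lem:mthPowerf1} for the support and radical conditions, and $y=f_2$ with $\Delta/(f_1,f_2)\Delta \simeq \kappa\otimes_R\tilde{R}$ for heredity), then conclude by Theorem~\ref{thm:Deltadzunique}. The only divergence is cosmetic: for Assumption~\ref{ass:nicenormalorder}(2) the paper identifies $\bar{\Delta}_{\frakq}$ directly as a central simple symbol algebra over $K(R/\frakq)$, whereas you invoke the codimension-one Azumaya property of $\Delta$ together with \'etaleness of $S/R$ at $\frakq$ --- both are valid one-line checks, and your explicit verification that $\frakq$ is inert in $S$ and that the twisting collapses in $\Delta/(f_1,f_2)\Delta$ makes the same points the paper treats implicitly.
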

\begin{proof}
From Theorem~\ref{prop:DeltaSameRam}, we may choose $\alpha$ so that $\Delta = (S,\alpha)$ represents the Brauer class $\beta$. We also know from Proposition~\ref{prop:Deltadisnormal} that $\Delta$ is a maximal order in a division ring. The result will thus follow from Theorem~\ref{thm:Deltadzunique} once we verify Assumption~\ref{ass:nicenormalorder}. 
Let $f \colon X \to \Spec R$ be the minimal resolution. Using the notation in Section~\ref{sec:HJcyclic}, toral terminal implies that we may pick $E_0\subset X$ to be such that $C:=f_*E_0$ corresponds to the codimension one prime $\frakq$ with $g_{\frakq} \neq 1$ if such a prime exists (and is otherwise an arbitrary prime divisor intersecting $E_1\setminus E_2$ in a $\kappa$-rational point). 

Note that $z$ is a toral function and hence gives a normal element of $\Delta$. We also know from Lemma~\ref{lem:mthPowerf1} that $z^m \in R$ and that its associated divisor is $mf_*E_0$. It follows that $(z) \triangleleft S$ is the unique prime lying over $\frakq$. Thus $\Delta/z \Delta$ is supported on $C$ as an $R$-module and Assumption~\ref{ass:nicenormalorder}(1) is verified. It also follows that $z$ lies in the radical of $\Delta_{\frakq}$.
Consider now
$$
\bar{\Delta} := \Delta/ z \Delta \simeq 
S/(z) \otimes_{R/\frakq} \tilde{R}
$$
where $\tilde{R}$ is the cyclic \'etale extension of $R$ determined by $\alpha$. 
 To see that $z$ generates the radical of $\Delta_{\frakq}$ it suffices to observe that $\bar{\Delta}_{\frakq}$ is a central simple $K(R/\frakq)$-algebra since it is readily identified with a symbol. This completes the verification of Assumption~\ref{ass:nicenormalorder}(2) so it remains only to show that $\bar{\Delta}$ is hereditary. To this end, let $y$ be the other generator of $\textup{rad}\, S$ denoted $f_2$ in Equation~\ref{eq:f2generator}. It is normal in $\Delta$ and thus $\bar{\Delta}$. Then $\bar{\Delta}/y \bar{\Delta} \simeq \kappa \otimes_R \tilde{R}$ which is a field and hence has global dimension zero. This completes the proof of the theorem. 

\end{proof}

\begin{corollary}  \label{cor:terminalIsRegular}
Let $\Lambda$ be a normal order over an excellent two-dimensional normal noetherian Hensel local domain $R$ with finite residue field. Suppose that its localised Brauer class $(\beta,g_{\frakp})$ is terminal and the following hold 
\begin{enumerate}
    \item the order $m$ of $\beta$ is prime $>5$ and
    \item $R$ has primitive $m$-th roots of unity.
\end{enumerate}
Then $\Lambda$ has global dimension two. 
\end{corollary}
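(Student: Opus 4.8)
The plan is to reduce Corollary~\ref{cor:terminalIsRegular} to the structure theorems already established, by invoking the classification of terminal localised Brauer classes from \cite{CI24} to show that a terminal class satisfying the stated hypotheses must in fact be toral terminal. Once that reduction is in place, the global dimension statement follows immediately from whichever structure proposition applies.

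First I would split into the two cases according to whether $R$ is regular or singular. If $R$ is regular, then by \cite{CI24} a terminal localised Brauer class whose order $m$ is a prime $>5$ over a domain with finite residue field must be one of the two types enumerated in Definition~\ref{def:stableno2ndram} or Definition~\ref{def:stable2ndram}; in other words, it is toral terminal in the sense of Definition~\ref{def:terminalHJ}(a). Then Proposition~\ref{prop:structuretoralterminalregno2nd} (in the no-secondary-ramification case) or Proposition~\ref{prop:structurestable2ndram} (in the secondary-ramification case) applies directly --- both hypotheses $\textup{Br}\, R/\frakm = 0$ and the existence of a primitive $m$-th root of unity are supplied, the former because a finite field has trivial Brauer group --- and each concludes that $\Lambda \simeq M_n(\Delta_d(z))$ has global dimension two.

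If $R$ is not regular, then by Remark~\ref{rem:relationToTerminal}(1), when $m$ is a prime $>5$ and $\kappa$ is finite, the only terminal localised Brauer classes on singular $R$ are the toral terminal ones of Definition~\ref{def:terminalHJ}(b). In particular $R$ is a $\kappa$-rational Hirzebruch-Jung singularity, $\beta$ is unramified in codimension one, $m = \det R$, and at most one $g_{\frakp} \neq 1$. Since $\kappa$ is finite, it has trivial Brauer group, so condition (1) of Definition~\ref{def:terminalHJ}(b) holds, and the remaining hypotheses of Theorem~\ref{thm:mainHJ} are met by assumption. That theorem then yields $\Lambda \simeq M_n(\Delta_d(z))$ and hence global dimension two.

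The main point --- which is really the only nontrivial step, since the structural work has all been done in the preceding sections --- is the appeal to \cite{CI24} to identify ``terminal'' with ``toral terminal'' under the hypotheses that $m$ is prime $>5$ and $\kappa$ is finite. I would therefore state this reduction explicitly, citing \cite[Theorem~7.1]{CI24} together with Remark~\ref{rem:relationToTerminal}, and be careful that the two cited propositions and Theorem~\ref{thm:mainHJ} between them exhaust all toral terminal classes in both the regular and singular centre cases. With that exhaustion verified, the conclusion that $\Lambda$ has global dimension two is immediate in every case.
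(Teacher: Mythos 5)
Your proposal is correct and matches the paper's (implicit) argument exactly: the paper states this corollary without proof, treating it as immediate from the \cite{CI24} classification of terminal classes (terminal $=$ toral terminal when $m$ is prime $>5$ and $\kappa$ is finite, as recorded in Remark~\ref{rem:relationToTerminal} and the discussion opening Section~\ref{sec:terminalregular}) combined with Propositions~\ref{prop:structuretoralterminalregno2nd}, \ref{prop:structurestable2ndram} and Theorem~\ref{thm:mainHJ}. Your case split into regular and singular centre, with the observation that finite residue fields have trivial Brauer group, is precisely the intended reduction.
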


\section{Appendix}  \label{sec:appendix}

The theory of normal and more generally hereditary orders over a complete discrete valuation ring is well-known and can be found in standard texts such as Reiner's classic text \cite{Reiner}. In this appendix, we extend some results to arbitrary discrete valuation rings $R$ which are not necessarily complete. 

Let $\frakm$ be the maximal ideal of $R$ and $K$ be its field of fractions. Let $\Delta$ be a maximal order in some $K$-central division ring $K\Delta$. Let $\Lambda\subseteq M_n(\Delta)$ be an hereditary order in $M_n(K\Delta)$ with say Jacobson radical $J$. Note that $\Delta/\textup{rad}\, \Delta$ is central simple, say isomorphic to $M_r(D)$ where $D$ is a division ring. 

\begin{proposition}  \label{prop:uniformprojective}
Consider a right projective $\Lambda$-module $P$ such that $\textup{End}\, P \simeq \Delta$. Then $P/PJ \simeq S^{\oplus r}$ where $S$ is a simple $\Lambda$-module and $r$ is the integer such that $\Delta/\textup{rad}\, \Delta \simeq M_r(D)$ for some division ring $D$. This result holds in particular for $P = \Delta^n$.
\end{proposition}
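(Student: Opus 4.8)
The plan is to handle the general statement first, reducing everything to a computation with endomorphism rings, and only then check the special case $P=\Delta^n$. The conceptual crux is that, because $R$ is not assumed complete (or Henselian), the maximal order $\Delta$ need \emph{not} be a local ring: its residue algebra $\bar\Delta:=\Delta/\textup{rad}\,\Delta$ is a full matrix ring $M_r(D)$ with possibly $r>1$, and it is precisely this matrix size $r$ that must emerge as the multiplicity of $S$ in $P/PJ$. Identifying $r$ correctly this way is the step I expect to be the main obstacle; the rest is essentially formal.

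First I would record that, since $P$ is a finitely generated projective $\Lambda$-module and $J=\textup{rad}\,\Lambda$, reduction modulo $J$ gives a ring homomorphism $\phi\colon \End_\Lambda P \to \End_{\Lambda/J}(P/PJ)$ which is surjective: any endomorphism of $P/PJ$ lifts through the projection $P\to P/PJ$ by projectivity of $P$. The target is the endomorphism ring of the \emph{semisimple} $\Lambda/J$-module $P/PJ$, hence is itself semisimple artinian, so $\ker\phi$ contains the Jacobson radical of $\End_\Lambda P\cong\Delta$, namely $\textup{rad}\,\Delta$. Thus $\phi$ descends to a surjection $\bar\phi\colon \Delta/\textup{rad}\,\Delta=M_r(D)\twoheadrightarrow \End_{\Lambda/J}(P/PJ)$. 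Because $M_r(D)$ is a simple ring and $P/PJ\neq 0$ by Nakayama, $\bar\phi$ is injective, so $\End_{\Lambda/J}(P/PJ)\cong M_r(D)$. Using simplicity here is the clean device that upgrades ``surjection'' to ``isomorphism'' without any delicate analysis of radicals of endomorphism orders in the non-complete setting.

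Next I would pin down $P/PJ$ from its endomorphism ring. Writing the semisimple module as $P/PJ\cong\bigoplus_k S_k^{\oplus a_k}$ with the $S_k$ pairwise non-isomorphic simple modules, its endomorphism ring is $\prod_k M_{a_k}(\End S_k)$, a product of simple artinian rings. Since this product is isomorphic to the \emph{simple} ring $M_r(D)$, which has no nontrivial central idempotents, there can be only one factor, so $P/PJ\cong S^{\oplus a}$ is isotypic with $\End(S^{\oplus a})\cong M_a(\End S)$. The uniqueness part of Artin--Wedderburn applied to $M_a(\End S)\cong M_r(D)$ then forces $\End S\cong D$ and $a=r$, giving $P/PJ\cong S^{\oplus r}$ as required.

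Finally, for the special case $P=\Delta^n$ I would verify the two inputs used above. Projectivity follows from $\Lambda$ being hereditary: the lattice $\Delta^n$ is a full $\Lambda$-lattice in the simple $M_n(K\Delta)$-module $K\Delta^n$, which is a direct summand of $K\Lambda=M_n(K\Delta)$; hence a suitable $R$-multiple of $\Delta^n$ embeds as a $\Lambda$-submodule of the free module $\Lambda$, and submodules of projectives over a hereditary ring are projective. For the endomorphism ring, any $\Lambda$-endomorphism of $\Delta^n$ extends $K$-linearly to an $M_n(K\Delta)$-endomorphism of $K\Delta^n$, which by Schur's lemma is multiplication by a scalar from $K\Delta$; preserving the lattice $\Delta^n$ forces that scalar into the maximal order, so $\End_\Lambda(\Delta^n)\cong\Delta$. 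With these two facts in hand the general argument applies verbatim.
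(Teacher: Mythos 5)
Your proof is correct, and its skeleton matches the paper's: both hinge on the surjection $\End_\Lambda P \to \End_\Lambda(P/PJ)$ obtained from projectivity of $P$, followed by identifying the target with $M_r(D)$. The difference lies in how that identification is made. The paper invokes the ideal theory of maximal orders (citing Auslander--Goldman) to assert that the \emph{only} semisimple quotient of $\Delta$ is $\Delta/\textup{rad}\,\Delta \simeq M_r(D)$, and concludes directly. You instead derive the same fact elementarily: a surjection of rings carries the Jacobson radical into the Jacobson radical, so the map factors through the simple ring $M_r(D)$; simplicity plus $P/PJ \neq 0$ (Nakayama) then upgrades the induced surjection to an isomorphism, and Artin--Wedderburn uniqueness pins down the multiplicity $r$. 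This buys independence from the maximal-order literature at the cost of a few extra lines, and it makes explicit the Artin--Wedderburn bookkeeping that the paper compresses into one sentence. Your treatment of the special case $P = \Delta^n$ also diverges: the paper identifies $\End_\Lambda \Delta^n = \End_{M_n(\Delta)} \Delta^n = \Delta$ by Morita theory, whereas you extend endomorphisms to the generic fibre $K\Delta^n$, apply Schur's lemma, and observe that lattice preservation forces the scalar into $\Delta$; moreover you verify projectivity of $\Delta^n$ over the hereditary order $\Lambda$ (by clearing denominators into $\Lambda$ and using that submodules of projectives are projective), a point the paper's proof leaves implicit. Both arguments are sound; yours is the more self-contained, the paper's the more economical.
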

\begin{proof}
Consider the natural ring homomorphism 
$$\Delta = \textup{End}_{\Lambda}  P \to \textup{End}_{\Lambda} P/PJ.$$
This map is surjective since $P$ is projective. From the ideal theory of maximal orders \cite[Theorem~2.3]{AG}, the only semisimple quotient of $\Delta$ is $\Delta/\textup{rad}\, \Delta \simeq M_r(D)$ so $P/PJ$ must be the direct sum of $r$ copies of a single simple.

Note finally that $\textup{End}_{\Lambda} \Delta^n = \textup{End}_{M_n(\Delta)} \Delta^n$ since $\Lambda$ is also an order in $M_n(K\Delta)$. The final statement follows thus from $\Delta = \textup{End}_{M_n(\Delta)} \Delta^n$ which is a consequence of Morita theory. 
\end{proof}

\begin{definition}  \label{def:normal}
We say that $\Lambda$ is {\em normal} if its Jacobson radical $J$ is free of rank 1 as a left and right module. 
\end{definition}
If $\Lambda$ is normal, then it is easy to see that one can choose a {\em uniformiser} $t \in J$ such that $J = \Lambda t = t \Lambda$ so the inner automorphism $r \mapsto trt^{-1}$ induces an automorphism of the semisimple ring $\Lambda/J$. We refer to this as the {\em $t$-action} on the Wedderburn components, which induces an analogous $t$-action on the simples (it maps a simple $S \mapsto S \otimes_{\Lambda} t\Lambda$). Suppose there are $d$ simples $S_1, \ldots, S_d$. The following show that the $t$-action permutes the Wedderburn components cyclically. 
\begin{proposition}  \label{prop:cyclesimples}
The action of $t$ permutes all the simples cyclically. In other words, by re-indexing if necessary, we may assume that $S_i^{\oplus r} \simeq Pt^{i-1}/Pt^{i}$ where $P = \Delta^n$. 
\end{proposition}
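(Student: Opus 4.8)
The plan is to analyse the $J$-adic filtration of the single projective $\Lambda$-module $P = \Delta^n$, and to read off both the cyclicity of the $t$-action and the identification $S_i^{\oplus r} \simeq Pt^{i-1}/Pt^i$ from its radical layers. Throughout I write $\phi$ for the automorphism of $\Lambda$ given by conjugation $x \mapsto t x t^{-1}$; this is well defined and preserves $J$ because $J = t\Lambda = \Lambda t$ forces $t\Lambda t^{-1} = \Lambda$, so $\phi$ descends to the automorphism of $\Lambda/J$ underlying the $t$-action.

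First I would set up the filtration. Since $\Lambda$ is hereditary, every $\Lambda$-lattice is projective, so $P = \Delta^n$ is a projective $\Lambda$-module; as $P$ is torsion free over $R$, right multiplication by $t$ is injective on $P$. From $J^i = t^i\Lambda = \Lambda t^i$ one gets $PJ^i = Pt^i$, a strictly decreasing chain with $\bigcap_i Pt^i = 0$, and each layer $L_i := Pt^{i-1}/Pt^i$ is killed by $J$, hence is a semisimple $\Lambda/J$-module.

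Next I would identify the layers. Right multiplication by $t$ gives a bijection $\mu\colon L_i \to L_{i+1}$, $\bar p \mapsto \overline{pt}$, and since $\lambda t = t\,\phi^{-1}(\lambda)$ one computes $\mu(\bar p\,\lambda) = \mu(\bar p)\,\phi^{-1}(\lambda)$; thus $L_{i+1} \simeq L_i^{\phi}$, which is exactly $L_i$ acted on by the $t$-action $\tau(-) = (-)\otimes_\Lambda t\Lambda$. Starting from Proposition~\ref{prop:uniformprojective}, which gives $L_1 = P/PJ \simeq S^{\oplus r}$ for a single simple $S =: S_1$, induction yields $L_i \simeq S_i^{\oplus r}$ with $S_i := \tau^{i-1}(S_1)$. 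This already proves the displayed isomorphism once I know the $S_i$ run through all the simples; note that $r$ is constant along the filtration because twisting by an automorphism preserves multiplicities.

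It remains --- and this is the crux --- to show that the $\tau$-orbit of $S_1$ is all $d$ simples, i.e.\ that $\tau$ is a single $d$-cycle. Here the completeness-based tools (Krull--Schmidt, projective covers, idempotent lifting) used in the classical structure theory are unavailable, so instead I would exploit the embedding $\Lambda \subseteq M_n(\Delta)$. As a right $\Lambda$-module $M_n(\Delta) \simeq P^{\oplus n}$ via its rows, so every simple $\Lambda$-module, being a quotient of $\Lambda/J$ and hence a subquotient of the submodule $\Lambda \subseteq M_n(\Delta) \simeq P^{\oplus n}$, is a simple subquotient of $P$. A finite-length subquotient of $P$ is trapped inside some $P/Pt^N$ (because $\bigcap_i Pt^i = 0$), whose composition factors lie among $L_1, \ldots, L_N$, hence among the $\tau$-orbit of $S_1$. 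Thus every one of the $d$ simples lies in this orbit, forcing the orbit to have size $d$; re-indexing so that $S_1$ is the top of $P$ then gives the stated cyclic enumeration together with the isomorphisms $S_i^{\oplus r} \simeq Pt^{i-1}/Pt^i$. The main obstacle is precisely this last step: making ``every simple appears in the layers of $P$'' rigorous without the structure theorem, for which the device is to replace it by the embedding into the maximal order $M_n(\Delta)$ combined with the $J$-adic trapping argument.
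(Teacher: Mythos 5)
Your overall strategy is the same as the paper's: identify the radical layers $Pt^{i-1}/Pt^i$ with successive $\phi$-twists of $S_1^{\oplus r}$ (this part of your argument is correct), and then force every simple into the $\tau$-orbit of $S_1$ by comparing $\Lambda$, whose top $\Lambda/J$ contains all $d$ simples, with copies of $P$ via $\Lambda \subseteq M_n(\Delta) \simeq P^{\oplus n}$. The gap is exactly at the step you call the crux. The claim that a finite-length subquotient $M/N$ of $P^{\oplus n}$ (with $N \subseteq M \subseteq P^{\oplus n}$) is a subquotient of some $P^{\oplus n}/P^{\oplus n}t^k$ does \emph{not} follow from $\bigcap_i Pt^i = 0$. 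Trivial intersection only gives that the descending chain $\bigl(M \cap P^{\oplus n}t^k + N\bigr)/N$ inside the finite-length module $M/N$ stabilizes; it does not make the stable value zero, i.e.\ it does not give $M \cap P^{\oplus n}t^k \subseteq N$ for large $k$. As a purely formal implication it is false: over a field $F$ take $\Lambda = P = F[t]$ with the filtration $\{Pt^k\}$, which has trivial intersection; the simple module $F[t]/(t-1)$ is a finite-length quotient of $P$, yet every composition factor of every $P/Pt^k$ is $F[t]/(t)$. What rescues the claim in your setting is structure your justification never invokes: $\frakm\Lambda \subseteq J$ and $J^e \subseteq \frakm\Lambda$ for some $e$ (so finite-length $\Lambda$-modules are $\frakm$-power torsion and the $t$-adic and $\frakm$-adic filtrations are intertwined), together with the Artin--Rees lemma for $M \subseteq P^{\oplus n}$ over the noetherian ring $R$, which yields $M \cap P^{\oplus n}t^{ek} \subseteq M \cap \frakm^k P^{\oplus n} \subseteq \frakm^{k-c}M \subseteq N$ for $k \gg 0$.

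The paper sidesteps this by never passing to subquotients: it only needs that finite-length \emph{quotients} of $P^{\oplus a}$ have composition factors in the orbit, which follows from Nakayama rather than Artin--Rees (in such a quotient the images of $P^{\oplus a}t^k$ stabilize at a finitely generated $\bar{Q}$ satisfying $\bar{Q}J = \bar{Q}$, whence $\bar{Q} = 0$; note this is where it matters that $J = \Lambda t$ is the Jacobson radical and not just some invertible ideal --- precisely what fails in the $F[t]$ example). It then arranges for the module of interest to sit inside such a quotient: for a finitely generated projective $Q$, clearing denominators gives an embedding $Q \hookrightarrow P^{\oplus a}$ with $P^{\oplus a}/Qt$ of finite length, so $Q/Qt \hookrightarrow P^{\oplus a}/Qt$, and taking $Q = \Lambda$ finishes. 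In your setup this repair is especially clean: $\Lambda t$ and $M_n(\Delta)$ are both full $R$-lattices in $M_n(K\Delta)$ and $R$ is a DVR, so $M_n(\Delta)/\Lambda t$ automatically has finite length, and $\Lambda/J = \Lambda/\Lambda t \hookrightarrow M_n(\Delta)/\Lambda t$ exhibits every simple as a composition factor of a finite-length quotient of $P^{\oplus n}$. With either repair (Artin--Rees, or this quotient-based argument) your proof is complete and becomes essentially the paper's.
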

\begin{proof}, 
From Proposition~\ref{prop:uniformprojective}, we may assume $S_1$ is the simple such that $P/Pt\simeq S_1^{\oplus r}$. The composition factors of any finite length quotient of $P$ all lie in the $t$-orbit of $S_1$. Let $Q$ be any finitely generated projective module. If the $M_n(K\Delta)$-module $Q \otimes_R K \simeq (K\Delta^n)^{\oplus a}$, then by clearing denominators, we can find an embedding $Q \hookrightarrow P^{\oplus a}$ such that cokernel of $Qt \hookrightarrow P^{\oplus a}$ has finite length. It follows that $Q/Qt$ has composition factors in the $t$-orbit of $S_1$ and we are done.  
\end{proof}

For our structure theory, we will need the following order
$$
\Delta_d := 
\begin{pmatrix}
\Delta & \Delta & \cdots & \Delta \\
\textup{rad} \Delta & \Delta & & \vdots \\
\vdots & \ddots & \ddots & \vdots \\
\textup{rad} \Delta & \cdots & \textup{rad} \Delta & \Delta
\end{pmatrix}
\subseteq M_d(\Delta)
$$
If $\pi \in \Delta$ is a generator for $\text{rad}\, \Delta$, then one readily shows that 
$$
\begin{pmatrix}
0 & 1 & 0 & \cdots & 0 \\
\vdots & 0 & \ddots & \ddots& \vdots \\
\vdots & & \ddots & &  0\\
0 &  &  & & 1 \\
\pi  & 0 & \cdots & & 0
\end{pmatrix} \in \Delta_d.
$$
generates the radical on the left and right so $\Delta_d$ is normal. 

The order $\Delta_d$ arises naturally as per the following
\begin{proposition}  \label{prop:computehomsofPtj}
Let $P$ be the projective $\Lambda$-module $\Delta^n$.
\begin{enumerate}
    \item As subsets of $\Hom_{M_n(K\Delta)}(P \otimes_R K, P \otimes_R K) = K\Delta$, we have
    $$
    \Hom_{\Lambda}(Pt^i,Pt^j) = 
    \begin{cases}
    \Delta & \text{if } 0 \leq i-j < d \\
    \textup{rad}\, \Delta & \text{if } 0 < j-i \leq d
    \end{cases}
    $$
    \item In particular, $\End_{\Lambda} (P \oplus Pt \oplus \ldots \oplus Pt^{d-1}) = \Delta_d$. 
\end{enumerate}
\end{proposition}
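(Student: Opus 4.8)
The plan is to pass to the generic fibre and convert every hom-group into a set of ``scalars'' in $K\Delta$. Since $\Lambda$ is also an order in $M_n(K\Delta)$ and $P\otimes_R K = (K\Delta)^n$ with $\End_{M_n(K\Delta)}(P\otimes_R K) = K\Delta$, every lattice $Pt^i$ shares the generic fibre $P\otimes_R K$ (as $t$ is invertible over $K$), and any $\Lambda$-linear map $Pt^i\to Pt^j$ extends uniquely to a $K\Delta$-endomorphism of $P\otimes_R K$, i.e.\ is left multiplication by a unique $\alpha\in K\Delta$. Thus
$$\Hom_\Lambda(Pt^i,Pt^j) = \{\alpha\in K\Delta : \alpha\, Pt^i\subseteq Pt^j\}.$$
Right multiplication by $t^{-j}$ is an inclusion-preserving bijection commuting with the left $K\Delta$-action, so this equals $H_{i-j}$, where $H_k := \{\alpha\in K\Delta : \alpha\,Pt^k\subseteq P\}$. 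Because $\Delta$ is a maximal order in the division ring $K\Delta$, every nonzero $\alpha$ satisfies $\alpha P = \pi^{v(\alpha)}P$, where $v$ is the valuation of $K\Delta$ and $\pi$ generates $\textup{rad}\,\Delta$; so everything reduces to locating the lattices $Pt^k$ in the chain $\cdots\supseteq \pi^{-1}P\supseteq P\supseteq \pi P\supseteq\cdots$.

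The crux is the identity $Pt^d = \pi P$ (equivalently $\pi^m P = Pt^{md}$ for all $m$), which says that the radical filtration $P\supseteq Pt\supseteq Pt^2\supseteq\cdots$ has period exactly $d$. For the inclusion $\pi P\subseteq Pt^d$ I would consider the two-sided ideals $I_j := \Hom_\Lambda(P,Pt^j) = \{\phi\in\Delta : \phi(P)\subseteq Pt^j\}$ of $\Delta$; as $\Delta$ is maximal, its only two-sided ideals are the $\pi^{c}\Delta$, so $I_j = \pi^{c(j)}\Delta$ with $c$ nondecreasing and $c(0)=0$. The layerwise inclusion $I_j/I_{j+1}\hookrightarrow \Hom_\Lambda(P, Pt^j/Pt^{j+1}) = \Hom_\Lambda(P, S_{j+1}^{\oplus r})$ vanishes unless the top $S_1^{\oplus r}$ of $P$ involves $S_{j+1}$, that is unless $d\mid j$; here I use the cyclic $t$-action of Proposition~\ref{prop:cyclesimples} together with $P/PJ\simeq S_1^{\oplus r}$ from Proposition~\ref{prop:uniformprojective}. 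Hence $c(1)=\cdots=c(d)=1$, so $\pi\in I_d$, giving $\pi P\subseteq Pt^d$.

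The reverse inclusion $Pt^d\subseteq\pi P$ is the main obstacle, since the ideal/vanishing argument alone is also consistent with a longer period; the content is an independent length comparison. I would compare the $R$-lengths of the finite-length quotients $P/Pt^d$ and $P/\pi P$. On one side $\textup{len}_R(P/Pt^d)=\sum_{i=1}^d\textup{len}_R(S_i^{\oplus r})$, and because conjugation by the uniformiser $t$ is an $R$-algebra automorphism of $\Lambda$ permuting the simples cyclically, all $\textup{len}_R(S_i)$ coincide. On the other side, writing $P=\Delta^{1\times n}$ gives $P/\pi P\simeq(\Delta/\textup{rad}\,\Delta)^{1\times n}$, of $R$-length $n\,\textup{len}_R(\Delta/\textup{rad}\,\Delta)$. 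Normality forces equal block sizes $s=n/d$, so each Wedderburn component of $\Lambda/J$ is $M_{sr}(D)$; feeding $\textup{len}_R(S_1)=sr\,\textup{len}_R(D)$ and $\textup{len}_R(\Delta/\textup{rad}\,\Delta)=r^2\,\textup{len}_R(D)$ into the two expressions yields $\textup{len}_R(P/Pt^d)=\textup{len}_R(P/\pi P)$, which with $\pi P\subseteq Pt^d$ forces $Pt^d=\pi P$. (Alternatively, once $\Lambda$ is conjugated into the standard cyclic normal form of the appendix, $t$ becomes an explicit shift with $t^d=\pi\cdot(\textup{unit})$ and the identity is immediate; I expect this to be the cleanest route given the normal-form theory already developed.)

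Granting $Pt^d=\pi P$, part (1) is a short calculation. For $0\le k\le d-1$ the containment $Pt^k\subseteq P$ gives $\Delta\subseteq H_k$, while if $\alpha\in H_k$ had $v(\alpha)=-s<0$ then $Pt^k\subseteq\pi^{s}P=Pt^{sd}$ would force $k\ge sd\ge d$, a contradiction; hence $H_k=\Delta$. For $1\le m\le d$, every $\alpha\in\textup{rad}\,\Delta$ satisfies $\alpha P\subseteq\pi P=Pt^d\subseteq Pt^m$, so $\textup{rad}\,\Delta\subseteq H_{-m}$, whereas $\alpha P\subseteq Pt^m\subsetneq P$ forces $v(\alpha)\ge1$, giving $H_{-m}=\textup{rad}\,\Delta$; this accounts for the asymmetric ranges. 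Finally, for (2) I assemble the $d\times d$ array $\big(\Hom_\Lambda(Pt^i,Pt^j)\big)=(H_{i-j})_{0\le i,j\le d-1}$: the entries with $i\ge j$ are $\Delta$ and those with $i<j$ are $\textup{rad}\,\Delta$, which is precisely the hereditary order $\Delta_d\subseteq M_d(\Delta)$ defined earlier in this appendix, so $\End_\Lambda\big(\bigoplus_{i=0}^{d-1}Pt^i\big)\simeq\Delta_d$.
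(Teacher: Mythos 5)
Your overall strategy---reducing everything to locating the lattices $Pt^k$ inside the chain $\cdots \supseteq \pi^{-1}P \supseteq P \supseteq \pi P \supseteq \cdots$---rests on a claim that is false in the generality of this appendix, namely that every nonzero $\alpha \in K\Delta$ satisfies $\alpha P = \pi^{v(\alpha)}P$ for a valuation $v$. This holds when $\Delta/\textup{rad}\,\Delta$ is a division ring (e.g.\ when $R$ is complete), but the entire point of the appendix is that $R$ is an arbitrary DVR, where $\Delta/\textup{rad}\,\Delta \simeq M_r(D)$ with $r>1$ possible; handling this is exactly what Proposition~\ref{prop:uniformprojective} is for. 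Concretely, take $R = \mathbb{Z}_{(5)}$ and $K\Delta$ the rational quaternions $(-1,-1)_{\mathbb{Q}}$: the localised Hurwitz order $\Delta$ is maximal and Azumaya, $\textup{rad}\,\Delta = 5\Delta$, $\Delta/\textup{rad}\,\Delta \simeq M_2(\mathbb{F}_5)$, and for $\alpha = 2+i$ the ideal $\alpha\Delta$ has index $25$ in $\Delta$ while $5\Delta$ has index $5^4$, so $\alpha\Delta$ is not a power of $\textup{rad}\,\Delta$ and no $v(\alpha)$ exists. Only the \emph{two-sided} (fractional) ideals of $\Delta$ are powers of $\textup{rad}\,\Delta$. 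This breaks precisely the two steps of your last paragraph where the conclusion is derived ("if $\alpha \in H_k$ had $v(\alpha) = -s < 0$\ldots" and "$\alpha P \subseteq Pt^m \subsetneq P$ forces $v(\alpha)\ge 1$"). Your earlier observation that the $I_j = \Hom_\Lambda(P,Pt^j)$, $j\ge 0$, \emph{are} two-sided ideals is correct and is the germ of a valid repair: the sets $H_k$, $k \ge 0$, are likewise $\Delta$-bimodules in $K\Delta$, hence powers of $\textup{rad}\,\Delta$, and the composition-factor argument can be run on them directly; but as written you apply valuation reasoning to arbitrary elements rather than to these bimodules.

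There are two further gaps. First, "Hence $c(1)=\cdots=c(d)=1$": the layerwise vanishing shows the $c(j)$ are \emph{constant} for $1\le j\le d$, not that the constant equals $1$; for that you need that $\Delta/I_1 \simeq \End_\Lambda(P/Pt)$ (surjectivity coming from projectivity of $P$) is semisimple, so that $I_1 \supseteq \textup{rad}\,\Delta$ --- this is exactly the key step of the paper's proof and cannot be skipped. Second, your length comparison proving $Pt^d \subseteq \pi P$ requires each Wedderburn block of $\Lambda/J$ to be $M_{nr/d}(D)$; the cyclic $t$-action gives that the blocks are mutually isomorphic, but the \emph{value} $nr/d$ of the block size is part of the structure theory that lies downstream of this proposition (it follows from Theorem~\ref{thm:normaluptoMorita}, whose proof uses Proposition~\ref{prop:computehomsofPtj}), so invoking it here is circular --- as is your parenthetical "cleanest route" via the standard normal form of $\Lambda$. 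For contrast, the paper's proof needs neither the identity $Pt^d = \pi P$, nor any length count, nor even principality of $\textup{rad}\,\Delta$: after reducing to $i=0$, it identifies $\Hom_\Lambda(P,Pt)$ with the kernel of the surjection $\Delta = \End_\Lambda P \twoheadrightarrow \End_\Lambda(P/Pt)$, which is $\textup{rad}\,\Delta$ because that is the only semisimple quotient of the maximal order $\Delta$, and then uses Proposition~\ref{prop:cyclesimples} to see that $Pt^j/P$ (for $-d<j\le 0$) and $Pt/Pt^j$ (for $1\le j\le d$) have no composition factor isomorphic to $S_1$, so the relevant Hom-groups equal $\Delta$ and $\textup{rad}\,\Delta$ respectively.
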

\begin{proof}
Part (2) follows from (1) which we now prove. We may as well assume that $i = 0$. Note firstly that $\Hom_{\Lambda}(P,Pt)$ is the kernel of the natural surjection $\End_{\Lambda} P \to \End_{\Lambda} P/Pt$ which in turn is $\text{rad}\, \Delta$. For $-d< j \leq 0$, the composition factors of $Pt^j/P$ are all non-isomorphic to the simple summands of $P/Pt$. Hence $\Hom_{\Lambda}(P,Pt^j) = \Hom_{\Lambda}(P,P) = \Delta$ in this case. A similar argument shows that 
$$
\Hom_{\Lambda}(P,Pt) = \Hom_{\Lambda}(P,Pt^2) = \ldots = \Hom_{\Lambda}(P,Pt^d)
.$$
\end{proof}

\begin{theorem}  \label{thm:normaluptoMorita}
There exists $b | r$ such that $M_b(\Lambda) \simeq M_c(\Delta_d)$.
\end{theorem}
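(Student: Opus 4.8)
The plan is to exhibit $\Delta_d$ as an endomorphism ring and then to reduce the whole theorem to a single module isomorphism. Write $t$ for a uniformiser of the radical (as $\Lambda$ is normal), so $J=\Lambda t=t\Lambda$, and set $P=\Delta^n$ as in Proposition~\ref{prop:computehomsofPtj}. Put $P'=P\oplus Pt\oplus\cdots\oplus Pt^{d-1}$, so that Proposition~\ref{prop:computehomsofPtj}(2) gives $\End_\Lambda P'=\Delta_d$. Since $\End_\Lambda\big(\Lambda^{\oplus b}\big)=M_b(\Lambda)$ and $\End_\Lambda\big((P')^{\oplus c}\big)=M_c(\Delta_d)$, it suffices to produce integers $b\mid r$ and $c$ together with an isomorphism of right $\Lambda$-modules $\Lambda^{\oplus b}\simeq (P')^{\oplus c}$; applying $\End_\Lambda$ then yields $M_b(\Lambda)\simeq M_c(\Delta_d)$. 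Thus everything is converted into a comparison of multiplicities of indecomposable projectives in $\Lambda$ and in $P'$.

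The difficulty is that $R$ need not be Henselian, so Krull--Schmidt can fail and the multiplicity bookkeeping is not directly available. I would remedy this by base changing along the faithfully flat local map $R\to R^h$ to the Henselization; then $\Lambda^h$ is again normal with radical $\Lambda^h t$ and the same $d$ simple modules $S_1,\dots,S_d$, and now $\Lambda^h$ is semiperfect, so Krull--Schmidt holds. Let $Q_0,\dots,Q_{d-1}$ be the indecomposable projectives, with $Q_i$ the projective cover of $S_{i+1}$. By Proposition~\ref{prop:uniformprojective} the module $P^h$ has top $S_1^{\oplus r}$, so it is the projective cover of $S_1^{\oplus r}$, i.e. $P^h\simeq Q_0^{\oplus r}$; feeding in Proposition~\ref{prop:cyclesimples}, which identifies the top of $Pt^i$ with $S_{i+1}^{\oplus r}$, gives likewise $(Pt^i)^h\simeq Q_i^{\oplus r}$. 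Writing $W=Q_0\oplus\cdots\oplus Q_{d-1}$, we obtain $(P')^h\simeq W^{\oplus r}$.

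It remains to decompose $\Lambda^h=\bigoplus_i Q_i^{\oplus a_i}$ and to pin down the $a_i$. Here normality is decisive: $J^h$ is an invertible $\Lambda^h$-bimodule, so tensoring with it is an autoequivalence of the category of $\Lambda^h$-modules which, by Proposition~\ref{prop:cyclesimples}, permutes the $Q_i$ cyclically, while it fixes the isomorphism class of $\Lambda^h$ (since $J^h\simeq\Lambda^h$ as right modules). Comparing the two sides by Krull--Schmidt forces all $a_i$ to equal a common value $a$, so $\Lambda^h\simeq W^{\oplus a}$. The desired relation $\big(\Lambda^h\big)^{\oplus b}\simeq \big((P')^h\big)^{\oplus c}$ now reduces, again by Krull--Schmidt, to the numerical equation $ab=rc$; setting $g=\gcd(a,r)$, $b=r/g$ and $c=a/g$ solves it with $b\mid r$. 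Finally, since isomorphism of finitely generated modules over the Noetherian local ring $R$ is detected after the faithfully flat base change to $R^h$, the isomorphism over $R^h$ descends to $\Lambda^{\oplus b}\simeq (P')^{\oplus c}$ over $R$, completing the argument. I expect the descent-plus-Krull--Schmidt step to be the main obstacle: the integer $r$, which measures the failure of $\Delta$ to be local (equivalently the matrix size in $\Delta/\textup{rad}\,\Delta\simeq M_r(D)$), is exactly what is recorded by the divisibility $b\mid r$, and it only becomes visible after passing to the Henselization.
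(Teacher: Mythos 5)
Your proposal is correct in substance and shares the paper's skeleton: the module $P'=P\oplus Pt\oplus\cdots\oplus Pt^{d-1}$ with $\End_\Lambda P'=\Delta_d$ (Proposition~\ref{prop:computehomsofPtj}), equal multiplicity of the $d$ simples in the top of $\Lambda$ forced by the cyclic $t$-action (Proposition~\ref{prop:cyclesimples}), the numerics $ab=rc$ with $b\mid r$, and passage to endomorphism rings. Where you diverge is the Henselization detour, and it is unnecessary. The paper works directly over $R$: both $\Lambda^{\oplus b}$ and $(P')^{\oplus c}$ are finitely generated projective $\Lambda$-modules whose tops modulo $J$ are isomorphic, namely $(S_1\oplus\cdots\oplus S_d)^{\oplus ab}$, and finitely generated projectives with isomorphic tops are isomorphic over \emph{any} ring: lift the isomorphism of tops to a map $\phi\colon \Lambda^{\oplus b}\to (P')^{\oplus c}$, which is surjective by Nakayama; it splits since the target is projective, and the complementary summand $K$ lies inside $\Lambda^{\oplus b}J$, whence $K=KJ$ and $K=0$ by Nakayama again. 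No Krull--Schmidt, semiperfectness or Henselian hypothesis enters; your worry that Krull--Schmidt may fail over a non-Henselian $R$ is legitimate in general but irrelevant here, since only uniqueness of projective covers is needed.

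The one step you assert without proof is also the only genuinely delicate one: descent of the isomorphism from $R^h$ to $R$. As phrased (``isomorphism \dots is detected after the faithfully flat base change'') it is not a consequence of faithful flatness: faithfully flat base change detects whether a \emph{given} map is an isomorphism, not whether two modules are abstractly isomorphic (non-isomorphic ideals of a Dedekind domain can become isomorphic after a finite faithfully flat extension, e.g.\ the Hilbert class field). The statement you actually need --- for finitely generated modules over a module-finite algebra over a Noetherian local ring, $M\otimes_R R^h\simeq N\otimes_R R^h$ as $\Lambda^h$-modules implies $M\simeq N$ as $\Lambda$-modules --- is true, but it is a Noether--Deuring type theorem (classically stated for completions; the Henselization case follows since $R^h$ and $R$ have the same completion). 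Its proof is exactly the approximation-plus-Nakayama argument: using that $R^h$ has the same residue field as $R$ and that $\Hom$ commutes with flat base change, pick $\phi\in\Hom_\Lambda(M,N)$ congruent modulo $\frakm$ to the given isomorphism, conclude $\phi$ is surjective, argue symmetrically to get a surjection $N\to M$, and use that surjective endomorphisms of Noetherian modules are bijective. So your argument closes up once this lemma is proved or properly cited --- but note that its proof contains precisely the technique that lets the paper dispense with the Henselization altogether.
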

\begin{proof}
We know from Proposition~\ref{prop:cyclesimples}, that conjugation by $t$ permutes the Wedderburn components of $\Lambda/\text{rad}\, \Lambda$ so $\Lambda_{\Lambda}$ must be the projective cover of a semisimple module of the form $(S_1 \oplus \ldots \oplus S_d)^{\oplus a}$ for some $a$. We can find some $b|r$ such that $c = ba/r$ is an integer. It follows that $\Lambda^{\oplus b} \simeq (P \oplus Pt \oplus \ldots \oplus Pt^{d-1})^{\oplus c}$. Using Proposition~\ref{prop:computehomsofPtj} to compute endomorphism rings of both sides give the theorem. 
\end{proof}

\begin{corollary}  \label{cor:uniquenormal}
Up to isomorphism, a normal $R$-order $\Lambda$ is uniquely determined by the Brauer class of $K\Delta$ (in $\text{Br}\, K$) the number of simples of $\Lambda$ and the degree (or $R$-rank) of $\Lambda$. 
\end{corollary}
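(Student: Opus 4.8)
The plan is to reduce the statement to a combination of Theorem~\ref{thm:normaluptoMorita} and a ``Morita-to-isomorphism'' refinement. First I would observe that the three listed invariants pin down the relevant rigid data. The Brauer class of $K\Delta$ determines the central division algebra $K\Delta$ over $K$, and since $R$ is a (local) discrete valuation ring, the maximal order $\Delta$ in $K\Delta$ is unique up to isomorphism; in particular the integer $r$ with $\Delta/\mathrm{rad}\,\Delta \simeq M_r(D)$ is determined. Feeding in the number of simples $d$, the order $\Delta_d$ of this appendix (displayed just before Proposition~\ref{prop:computehomsofPtj}) is then determined up to isomorphism.

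Next I would invoke Theorem~\ref{thm:normaluptoMorita}, which gives some $b \mid r$ with $M_b(\Lambda) \simeq M_c(\Delta_d)$; thus $\Lambda$ is Morita equivalent to $\Delta_d$. Consequently, any two normal orders sharing the Brauer class of $K\Delta$ and the same number of simples lie in one and the same Morita equivalence class, namely that of $\Delta_d$. What remains is to show that, within this Morita class, the $R$-rank (degree) determines $\Lambda$ up to isomorphism.

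For that last step I would write $\Lambda \simeq \mathrm{End}_{\Delta_d}(Q)$ for a progenerator $Q$ and use normality to constrain $Q$. Applying Proposition~\ref{prop:cyclesimples} to $\Lambda$ itself, conjugation by a uniformiser permutes the $d$ simples cyclically, so $\Lambda/\mathrm{rad}\,\Lambda \simeq (S_1 \oplus \cdots \oplus S_d)^{\oplus a}$ with all multiplicities equal. Transporting this across the Morita equivalence, each of the $d$ indecomposable projective $\Delta_d$-modules occurs in $Q$ with one common multiplicity $n$, whence $\Lambda \simeq M_n(\Gamma)$, where $\Gamma$ is the basic algebra attached to $\Delta_d$ and so depends only on the Morita class. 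The degree then fixes $n$ via $\deg \Lambda = n\,\deg \Gamma$, and the corollary follows.

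The delicate point is exactly this passage from Morita equivalence to an honest isomorphism: one cannot simply cancel matrix factors in $M_b(\Lambda) \simeq M_c(\Delta_d)$, since cancellation may fail, so the argument must genuinely exhibit $\Lambda$ as $M_n$ of a canonical order. Forcing the indecomposable projectives to occur with equal multiplicity is the crux, and over a discrete valuation ring that need not be complete one does not have Krull--Schmidt or idempotent lifting available. I would therefore run this part entirely through the explicit uniformiser $t$, the modules $P, Pt, \ldots, Pt^{d-1}$, and the semisimple-quotient data of Propositions~\ref{prop:uniformprojective} and \ref{prop:cyclesimples} together with the $\mathrm{Hom}$-computations of Proposition~\ref{prop:computehomsofPtj}, rather than through abstract semiperfectness.
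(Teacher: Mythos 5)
Your first two steps coincide with the paper's (Theorem~\ref{thm:normaluptoMorita} places $\Lambda$ in the Morita class of $\Delta_d$, and the real issue is passing from Morita equivalence to isomorphism), but the way you close that gap is not merely incomplete --- the statement you are driving at is false. You assert that $\Delta_d$ has exactly $d$ indecomposable projectives and that the progenerator $Q$ with $\Lambda \simeq \End_{\Delta_d}Q$ contains each of them with one common multiplicity, so that $\Lambda \simeq M_n(\Gamma)$ for a canonical $\Gamma$. This is the intuition from the complete case, where $r=1$ in the notation $\Delta/\textup{rad}\,\Delta \simeq M_r(D)$; the whole point of this appendix is that over a non-complete discrete valuation ring one can have $r>1$. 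Then Proposition~\ref{prop:classifyindecprojovernormal} says the indecomposable projective $\Delta_d$-modules are the projective covers of \emph{arbitrary} direct sums of $r$ simples, so there are $\binom{d+r-1}{r} > d$ isomorphism classes, not $d$. Moreover, transporting Proposition~\ref{prop:cyclesimples} across the Morita equivalence only tells you that the top $Q/Q(\textup{rad}\,\Delta_d)$ is $(S_1\oplus\cdots\oplus S_d)^{\oplus a}$, i.e.\ equal multiplicities of \emph{simples}; when $r \nmid a$ this cannot come from a sum of the $d$ ``row'' projectives, and $\Lambda$ is not a matrix algebra over $\Delta_d$ or over any basic object. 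Concretely: let $R = \bZ_{(p)}$ ($p$ odd), $K\Delta$ the rational quaternion division algebra (split at $p$, so $r = 2$), $d = 2$, and let $\Lambda$ be the preimage in $\Delta$ of the upper triangular matrices of $\Delta/\textup{rad}\,\Delta \simeq M_2(\bZ/p)$. One checks on the completion (and descends, since a lattice over a discrete valuation ring is recovered from its completion) that $\Lambda$ is normal with two simples; it is $\End_{\Delta_2}$ of the projective cover of $S_1\oplus S_2$, a mixed-flag module, and it has degree $2$, whereas $M_n(\Delta_2)$ has degree $4n$ and $M_n$ of the basic algebra of $\Delta_2$ has degree $6n$. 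Nevertheless $M_2(\Lambda)\simeq\Delta_2$, exactly as Theorem~\ref{thm:normaluptoMorita} predicts. So no computation with $t$ and $P, Pt, \ldots, Pt^{d-1}$ can rescue your final step, because its conclusion fails.

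The paper's own proof avoids decomposing $Q$ into indecomposables entirely, and the tool it uses is the one missing from your plan: \emph{uniqueness of projective covers}, which needs neither Krull--Schmidt nor idempotent lifting. By Proposition~\ref{prop:cyclesimples} (your transport step, which is correct), the Morita bimodule $Q_{\Delta_d}$ is the projective cover of $S^{\oplus a}$ where $S = S_1\oplus\cdots\oplus S_d$; the degree of $\Lambda$ determines $a$ uniquely; and since a projective cover of a given module is unique up to isomorphism, $Q$ and hence $\Lambda \simeq \End_{\Delta_d}Q$ are pinned down by the Brauer class, $d$ and the degree. That is the entire argument. (If you want the structural statement you were aiming for, the correct version is that every normal order in this Morita class is $M_k(\Gamma_0)$ where $\Gamma_0$ is $\End_{\Delta_d}$ of the projective cover of $(S_1\oplus\cdots\oplus S_d)^{\oplus r/\gcd(r,d)}$ --- in general a mixed-flag object, not $\Delta_d$ --- and proving that again goes through uniqueness of projective covers.)
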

\begin{proof}
Suppose the Morita equivalence between $\Lambda$ and $\Delta_d$ is given by the Morita bimodule $ \ _{\Lambda}Q_{\Delta_d}$. If $S$ is the direct sum of one copy of each simple $\Delta_d$-module, then $Q_{\Delta_d}$  is the projective cover of a semisimple module $S^{\oplus a}$ for some $a$. The degree of $\Lambda$ determines $a$ uniquely. 
\end{proof}

To determine $\Lambda$ itself, it suffices to classify all possible indecomposable projective $\Delta_d$-module which we do now. 

Let $\bar{\Delta} = \Delta / \text{rad}\, \Delta$. We define a {\em $\bar{\Delta}$-flag} to be a sequence of $\bar{\Delta}$-submodules
$$
0 \leq \bar{I}_1 \leq \bar{I}_2 \leq \ldots \leq \bar{I}_d = \bar{\Delta}.
$$
Their inverse images in $\Delta$ gives the sequence of $\Delta$-modules
$$
\text{rad}\, \Delta \leq I_1 \leq I_2 \leq \ldots \leq I_d = \bar{\Delta}.
$$
Note that the module of row vectors $Q = (I_1 \ I_2 \ \ldots \ I_d)$ defines a $\Delta_d$-submodule of $\Delta^d$. It is projective since $\Delta_d$ is normal. 
\begin{proposition}  \label{prop:classifyindecprojovernormal}
The order $\Delta_d$ is normal.
\begin{enumerate}
\item The projective $\Delta_d$-module $Q$ constructed from a $\bar{\Delta}$-flag as above is indecomposable.
\item Every indecomposable projective $\Delta_d$-module has this form.
\item In particular, the indecomposable projectives are precisely the projective covers of any direct sum of $r$ simple $\Delta_d$-modules.
\end{enumerate}
\end{proposition}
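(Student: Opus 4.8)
The plan is to establish the three assertions in turn, with part (2) as the heart of the matter. Throughout I use that $\Delta_d$ is hereditary (its radical is principal, hence projective), so that finitely generated projectives and lattices coincide and every submodule of a projective is projective; this is exactly what makes each $Q$ projective, since $Q\subseteq(\Delta\ \cdots\ \Delta)=e_1\Delta_d$ is a submodule of the first row. For part (1) I would compute the endomorphism ring of $Q$. As $Q\otimes_R K=(K\Delta)^{1\times d}$ is the unique simple right $M_d(K\Delta)$-module and $Q$ is a full lattice, extension of scalars embeds $\textup{End}_{\Delta_d}(Q)$ as an $R$-order in $\textup{End}_{M_d(K\Delta)}((K\Delta)^{1\times d})\cong K\Delta$; concretely it is $\{\lambda\in\Delta:\lambda I_j\subseteq I_j\ \text{for all }j\}$. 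Since $K\Delta$ is a division ring its only idempotents are $0$ and $1$, so the same holds for the subring $\textup{End}_{\Delta_d}(Q)$, and $Q$ is indecomposable. The identical argument shows that each row $e_i\Delta_d$ is indecomposable.

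The substance is part (2). Let $P$ be an indecomposable projective, hence an indecomposable $\Delta_d$-lattice, and let $e_1,\dots,e_d$ be the diagonal idempotents. Writing $P_i:=Pe_i$, each $P_i$ is a $\Delta$-lattice, and the superdiagonal matrix units together with the corner $\textup{rad}\,\Delta$ furnish $\Delta$-maps $P_i\to P_{i+1}$ and $P_d\to P_1$ whose total composite is multiplication by a generator $\pi$ of $\textup{rad}\,\Delta$. Rationally these maps are isomorphisms, so after identifying all $P_i$ with full $\Delta$-lattices in a single space $V=(K\Delta)^{\oplus a}$ one obtains a chain $P_1\subseteq\cdots\subseteq P_d$ with $\textup{rad}\,\Delta\cdot P_d\subseteq P_1$, and conversely every such chain yields a $\Delta_d$-lattice. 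The first key step is the case $d=1$: every $\Delta$-lattice is free. Indeed the top $\bar M=M/M\,\textup{rad}\,\Delta$ is a module over the semisimple ring $\bar\Delta=M_r(D)$, so a sum $T^{\oplus s}$ of copies of the simple $T$; a length count over $\kappa$ (comparing $M/\mathfrak{m}M$ with $\Delta/\mathfrak{m}\Delta$ along the $\textup{rad}\,\Delta$-adic filtration) forces $s=ar$, whence $\bar M\cong\bar\Delta^{\oplus a}$ is free, and lifting a $\bar\Delta$-basis by Nakayama gives a surjection $\Delta^{\oplus a}\twoheadrightarrow M$ of equal rank, hence an isomorphism.

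With $P_d\cong\Delta^{\oplus a}$ I would normalise the embedding so that $P_d=\Delta^{\oplus a}$ and $\textup{rad}\,\Delta\cdot\Delta^{\oplus a}\subseteq P_1$. Reducing the chain modulo $\textup{rad}\,\Delta\cdot\Delta^{\oplus a}$ gives a flag $\bar I_1\subseteq\cdots\subseteq\bar I_d=\bar\Delta^{\oplus a}$ of right $\bar\Delta$-submodules of the free module $\bar\Delta^{\oplus a}=T^{\oplus ar}$. Since $\bar\Delta$ is semisimple I can choose simple summands $T^{(1)},\dots,T^{(ar)}$ adapted to this flag, group them into $a$ blocks of $r$, and observe that any $r$ copies of $T$ span a free rank-one $\bar\Delta$-module and that any grouping is automatically compatible with the flag; this yields a decomposition $\bar\Delta^{\oplus a}=\bigoplus_{l=1}^{a}F_l$ into free rank-one summands refining the flag. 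The decisive point, and the step I expect to be the main obstacle — because $\Delta_d$ is not semiperfect and Krull--Schmidt fails over the non-complete $R$ — is to lift this to the lattice level. Here I use that $GL_a(\Delta)\to GL_a(\bar\Delta)$ is surjective (an element of a ring is a unit as soon as its image modulo the radical is), so the block decomposition lifts to $\Delta^{\oplus a}=\bigoplus_l\tilde F_l$ with $\tilde F_l\cong\Delta$ reducing to $F_l$. Taking preimages of $\bar I_i\cap F_l$ shows $P_i=\bigoplus_l(P_i\cap\tilde F_l)$ for every $i$, whence $P\cong\bigoplus_l Q^{(l)}$ is a direct sum of flag modules. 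As $P$ is indecomposable there is a single summand, so $P\cong Q$ for some $\bar\Delta$-flag, proving (2).

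Finally, part (3) is bookkeeping on tops. For any finitely generated projective $P$ the submodule $PJ$ (with $J=\textup{rad}\,\Delta_d$) is superfluous, so $P\to P/PJ$ is a projective cover; it therefore suffices to identify $Q/QJ$ for a flag module. Describing the flag $\bar I_1\subseteq\cdots\subseteq\bar I_d=\bar\Delta$ by the $\bar\Delta$-length jumps $c_j:=\ell(\bar I_j)-\ell(\bar I_{j-1})$, which satisfy $\sum_j c_j=r$, a direct computation gives $Q/QJ\cong\bigoplus_{j=1}^d S_j^{\oplus c_j}$, a direct sum of exactly $r$ simple modules; the row module $e_i\Delta_d$ recovers the special case $c_i=r$ with top $S_i^{\oplus r}$. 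Since every tuple $(c_j)$ with $\sum_j c_j=r$ is realised by some flag, combining this with (1) and (2) shows that the indecomposable projectives are precisely the projective covers of the direct sums of $r$ simple $\Delta_d$-modules.
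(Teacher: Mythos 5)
Your proposal is correct, and parts (1) and (3) run essentially as in the paper (rational indecomposability of $Q$; computing tops of flag modules and matching them with sums of $r$ simples). But your proof of part (2), the heart of the proposition, takes a genuinely different route. The paper argues abstractly with projective covers: if $L$ is an indecomposable projective whose top has more than $r$ simples, it splits off a direct summand $T$ of exactly $r$ simples in $L/LJ$, lifts the surjection $L \to T$ through the flag-module projective cover $Q$ of $T$, and the resulting surjection $L \to Q$ splits, contradicting indecomposability of $L$; if the top has fewer than $r$ simples, the same lifting argument run in the other direction makes $L$ a proper summand of some flag module $Q$, contradicting part (1); uniqueness of projective covers then finishes. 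This is short and requires no structure theory of $\Delta$-lattices. You instead prove a structural decomposition theorem: every $\Delta$-lattice is free (your length count along the $\operatorname{rad}\Delta$-adic filtration plus Nakayama — this lemma is nowhere in the paper and is a genuine addition), $\Delta_d$-lattices are chains of full $\Delta$-lattices, and a flag-compatible decomposition of $\bar{\Delta}^{\oplus a}$ into \emph{free} rank-one blocks lifts to the lattice level because units, unlike idempotents, always lift modulo the Jacobson radical ($GL_a(\Delta) \to GL_a(\bar{\Delta})$ is surjective). That is exactly the right workaround for the failure of semiperfectness and Krull--Schmidt over the non-complete $R$ (note $\Delta$ itself has non-liftable idempotents in $\bar{\Delta}=M_r(D)$ whenever $r>1$, precisely because $K\Delta$ is a division ring — your insistence on grouping simples into free blocks of $r$ before lifting is what makes the argument go through). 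Your approach is longer but buys strictly more: it shows every $\Delta_d$-lattice, in particular every finitely generated projective, is a direct sum of flag modules, from which (2) and (3) fall out simultaneously. Two small points to tidy: the chain condition should read $P_d\cdot\operatorname{rad}\Delta\subseteq P_1$ (these are right modules, and the composite of the chain maps is right multiplication by $\pi$); and your length count tacitly uses that $\operatorname{rad}\Delta$ is principal (equivalently invertible) and that $\mathfrak{m}\Delta$ is a power of it — both are available here since $\Delta$ is maximal, but they deserve a citation or a line of proof.
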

\begin{proof}
It is easy to see that $\Delta_d$ is normal by constructing a left and right generator $t$ for $\text{rad}\, \Delta_d$. Note that $Q\otimes_R K \simeq (K\Delta)^d$ is an indecomposable $M_d(K\Delta)$-module so $Q$ is also indecomposable. 

If $S$ denotes a simple $\Delta$-module, then the simple $\Delta_d$-modules are 
$$
(S \ 0 \ldots 0), \quad (0 \ S \ 0 \ldots 0), \ldots, \quad ( 0 \ldots 0 \ S).
$$
Thus $Q$ is the projective cover of the semisimple module
$$
Q/Q (\text{rad}\, \Delta_d) \simeq (\bar{I}_1 \ I_2/I_1 \ldots I_d/I_{d-1})
$$
which is a direct sum of exactly $r$ simples. By varying the $\bar{\Delta}$-flag, we can construct the projective cover of any direct sum of $r$ simples we like. It thus remains to show there are no other indecomposable projective modules. Let $L$ be one such and suppose $L/L(\text{rad}\, \Delta_d)$ is a direct sum of more than $r$ simples. Then we can find a direct summand $T$ consisting of precisely $r$ simples and use an appropriate $\bar{\Delta}$-flag to construct the projective cover $Q$ of $T$. Then the natural surjection $L \to T$ lifts to a surjection $L \to Q$ which must split, a contradiction. If on the other hand, $L/L(\text{rad}\, \Delta_d)$ had fewer than $r$ simples, then we could apply the same argument to show that some projective $Q$ constructed using a $\bar{\Delta}$-flag decomposes, another contradiction.
\end{proof}

\bibliographystyle{amsalpha}

\bibliography{references}

\end{document}